\documentclass{article}

\usepackage[all]{xy}
\usepackage{mathrsfs}
\usepackage{amssymb}
\usepackage{amsthm}
\usepackage{amsmath}
\usepackage{amsfonts}
\usepackage{authblk}

\usepackage{url}

\newtheorem{thm}{Theorem}
\newtheorem*{thm*}{Theorem}

\newtheorem{prop}[thm]{Proposition}
\newtheorem{cor}[thm]{Corollary}

\newtheorem{defn}{Definition}

\allowdisplaybreaks

\author[1]{Yaacov Kopeliovich}
\author[2]{Camilo Sanabria Malagón\thanks{corresponding author: c.sanabria135@uniandes.edu.co}}

\affil[1]{School of Business\\ University of Connecticut\\ Storrs, CT, USA}
\affil[2]{Departamento de Matemáticas\\ Universidad de los Andes\\ Bogotá, Colombia}

\title{Schwarz maps for modular curves}

\begin{document}

\date{}

\maketitle

\begin{abstract}

  We solve a classical problem posed by F. Klein and studied by A. Hurwitz concerning the construction of linear ordinary differential equations associated with modular transformations of fixed degree. For every odd integer $N\ge 3$ (respectively, even integer $N\ge 4$), we construct a canonical invariant model of the modular curve $X(N)=\mathbb{H}/\Gamma(N)$ (respectively, $X_H(N)=\mathbb{H}/H(N)$ where $H(N)=\Gamma(N)\cap\Gamma_0^0(2N)$), together with a linear ordinary differential equation with rational coefficients whose Schwarz map parametrizes this model and whose projective monodromy group is the finite quotient $PSL_2(\mathbb{Z})/\tilde{\Gamma}(N)$ (respectively, $PSL_2(\mathbb{Z})/\tilde{H}(N)$). The construction is expressed in terms of invariant projective geometry and Picard-Vessiot theory and yields equations that are canonical up to projective equivalence. In this framework, Hurwitz's classical equation for degree $7$ appears as a special case of a general mechanism. The results place Klein's question within the modern theory of algebraic linear ordinary differential equations and provide a uniform geometric realization of modular transformation groups as projective differential Galois groups. As an application, we construct an explicit example of a linear ordinary differential equation associated with $X(9)$.
\end{abstract}

\section*{Introduction}

  In an 1885 paper, A. Hurwitz studied a question posed by F. Klein concerning the construction of linear ordinary differential equations associated with modular transformations of fixed degree \cite{HURWITZ1886}. In the case of degree $7$, Hurwitz explicitly constructed a third-order Fuchsian linear ordinary differential equation whose solutions realize the modular transformation, thereby producing an equation with prescribed finite projective monodromy. This construction, closely related to independent work of G. H. Halphen \cite{HALPHEN1884}, may be regarded as an early instance of the problem of realizing a finite group as the projective monodromy group of a linear ordinary differential equation.

  Klein's interest in this problem was part of his broader Erlangen program \cite{KLEIN1893}, and in particular, of interpreting analytic objects through their transformation groups. Modular equations of degree $N$ (i.e., the algebraic relations between $j(\tau)$ and $j(N\tau)$) carry natural actions of finite groups arising from quotients of the modular group, and Klein sought analytic realizations of these symmetries analogous to his geometric realizations of the same groups as automorphism groups of algebraic curves, such as the modular curve of level $7$ \cite{KLEIN1878}. Indeed, earlier on, he identified the modular curve of level $5$ with the icosahedron and found an associated hypergeometric differential equation \cite{KLEIN1878b}. Constructing a linear differential equation whose solutions transform according to a prescribed modular symmetry amounts to realizing the same group action at the level of analytic continuation and monodromy. In modern terms, this asks for a linear ordinary differential equation whose Schwarz map parametrizes a model of the corresponding modular curve and whose projective monodromy group coincides with the finite quotient of the modular group under consideration.

  From a modern perspective, Klein and Hurwitz's work anticipates several notions that now play a central role in the study of linear ordinary differential equations with finite differential Galois groups. In particular, the ratios of solutions define what is now called a Schwarz map, whose image is a projective curve invariant under a finite linear group and whose monodromy is the projectivization of the differential Galois group. The appearance of invariant polynomials, quotient constructions, and algebraic relations among solutions indicates that the problem is naturally situated within Picard-Vessiot theory and invariant theory, even though these frameworks were not yet fully available at the time.

  Hurwitz cautiously observed that his method might not extend directly to modular transformations of higher degree. Beyond the exceptional case of degree $7$, the Fuchs relation admits multiple compatible collections of characteristic exponents, and the classical approach provides no intrinsic criterion for selecting a distinguished differential equation. This ambiguity reflects the fact that the problem is global and geometric in nature, while the classical method relies primarily on local analytic data.

  Subsequent developments have clarified important aspects of this situation. Beukers and Heckman classified hypergeometric equations of type ${}_nF_{n-1}$ with algebraic solutions by determining their monodromy representations through the interlacing criterion \cite{BEUKERS1989}. Van der Put and Ulmer analyzed linear differential equations with finite differential Galois groups and three singularities, developing methods to constrain admissible local exponent data \cite{VANDERPUT2000}. These results provide structural information and explicit normal forms in important special cases. However, they do not resolve Klein's original question: neither the restriction of exponent data nor the classification of hypergeometric equations produces a canonical differential equation attached to a modular transformation of arbitrary degree.

  The guiding principle of the present paper is that Klein's question should be formulated in terms of Schwarz maps and invariant projective geometry, rather than in terms of characteristic exponents. Following the approach developed in \cite{SANABRIA2017}, we view an algebraic linear ordinary differential equation through the geometry of its Schwarz map and the associated quotient Schwarz map. In this setting, linear ordinary differential equations with finite differential Galois groups are naturally organized by invariant curves in projective space together with their quotients by finite linear groups.

  The main contribution of this paper is to demonstrate that, for modular transformations of arbitrary degree, the associated invariant projective curve can be constructed explicitly and that there exists a linear ordinary differential equation whose Schwarz map parametrizes a model of this curve. The resulting equation is canonical up to projective equivalence. The ambiguity present in Hurwitz's approach is absorbed into the geometry of the quotient and into the choice of a suitable Picard-Vessiot extension, rather than appearing as an indeterminacy in the local exponent data.

  Another limitation anticipated by Hurwitz is related to his reliance on the Abel-Jacobi map, which embeds the modular curve into a projective space of dimension equal to the genus of the curve. He observed that this ambient dimension is excessive in the sense that the resulting differential equation admits further reduction. A key tool in our construction is the use of theta functions with characteristics, as developed by Farkas, Kra, and Kopeliovich \cite{FARKAS1996}, which provide explicit models of modular curves in projective space together with projective representations of the corresponding quotient of the modular group. These models yield invariant projective curves whose symmetry groups coincide with the projective monodromy groups under consideration, thereby allowing the construction of linear ordinary differential equations with prescribed Schwarz maps and finite differential Galois groups.

  Within this framework, Hurwitz's construction for degree $7$ appears as a special case of a general mechanism. More generally, the differential equations associated to modular transformations of higher degree are characterized by their Schwarz maps and invariant geometry, rather than by ad hoc choices of characteristic exponents. This places Klein's original question within a setting where it admits a natural and systematic solution consistent with the modern theory of algebraic linear ordinary differential equations.

  \begin{thm*}
  For every odd integer $N\ge 3$, let $X(N)$ denote the modular curve associated with the principal congruence subgroup $\Gamma(N)$; and for every even integer $N\ge 4$, let $X_H(N)$ denote the modular curve associated with the subgroup $H(N)=\Gamma(N)\cap\Gamma_0^0(2N)$. There exists a canonical invariant projective model $C_N$ of $X(N)$ (respectively, $X_H(N)$), together with a linear ordinary differential equation with rational coefficients whose Schwarz map parametrizes $C_N$ and whose projective monodromy group is isomorphic to the finite quotient $PSL_2(\mathbb{Z})/\tilde{\Gamma}(N)$ (respectively, $PSL_2(\mathbb{Z})/\tilde{H}(N)$). The equation is unique up to projective equivalence, and its Picard-Vessiot extension is an Abelian extension of the function field of $X(N)$ (respectively, $X_H(N)$).
  \end{thm*}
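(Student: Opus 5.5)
The plan is to follow the Schwarz-map/invariant-geometry approach of \cite{SANABRIA2017} and to build everything from the theta-function model of \cite{FARKAS1996}. For a given $N$, write $G_N$ for $PSL_2(\mathbb{Z})/\tilde{\Gamma}(N)$ when $N$ is odd and for $PSL_2(\mathbb{Z})/\tilde{H}(N)$ when $N$ is even.

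\textbf{Step 1: the invariant model $C_N$.} Consider the vector $\Theta(\tau)$ of theta constants with characteristics $\theta\bigl[\begin{smallmatrix}\epsilon\\ \epsilon'\end{smallmatrix}\bigr](0,N\tau)$. Here $\epsilon$ ranges over $\{(2k+1)/N\}$ in the odd case, and over the appropriate characteristics of level $2N$ in the even case, taken up to the symmetry $\epsilon\mapsto-\epsilon$. By the transformation formulas in \cite{FARKAS1996}:
\begin{itemize}
\item these functions are modular forms of weight $1/2$ for $\Gamma(N)$ (respectively $H(N)$);
\item the generators $T$ and $S$ act on $\Theta$ through explicit matrices, namely diagonal roots of unity for $T$ and a finite Fourier-type matrix for $S$.
\end{itemize}
This yields a projective representation $\rho\colon G_N\to PGL_n(\mathbb{C})$, and I will lift it to a finite linear group $\hat G\subset SL_n(\mathbb{C})$ (a central extension of $G_N$). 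The map $\tau\mapsto[\Theta(\tau)]$ then factors through $X(N)$ (respectively $X_H(N)$) and is equivariant. Let $C_N$ be the Zariski closure of its image. I will then check:
\begin{itemize}
\item $C_N$ is $\hat G$-invariant;
\item $\rho$ is faithful;
\item the map is birational onto $C_N$, because the theta quotients separate points and generate the function field.
\end{itemize}
Canonicity follows since the construction depends only on $N$.

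\textbf{Step 2: the differential equation.} Let $J$ be the Klein $j$-invariant, or more precisely $J=\hat G$-invariant rational function on $C_N$ realizing the quotient $C_N\to C_N/G_N\cong\mathbb{P}^1$. Regard $\Theta$ as a vector of multivalued functions of $z=J$ and take $L$ to be the minimal monic operator in $\mathbb{C}(z)[\partial_z]$ annihilating a suitable twist $f\cdot\Theta$. The twist is chosen so that the Wronskian becomes a rational function. A candidate is $f=(dJ/d\tau)^{-1/2}$ times an appropriate power of $J$ and $J-1$, which is the weight $-1/2$ correction making the components functions of $z$ up to algebraic multipliers. The coefficients of $L$ are Wronskian ratios, and they are invariant under analytic continuation: continuation acts by $\hat G$ up to a scalar that the twist absorbs. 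Hence the coefficients lie in $\mathbb{C}(z)$.

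The Schwarz map of $L$ is $z\mapsto[\Theta]$, so it parametrizes $C_N$. The projective monodromy is the image of $\pi_1(\mathbb{P}^1\setminus\{0,1,\infty\})$, which is generated by the images of $S$, $T$ and $ST$. It is therefore $\rho(G_N)\cong G_N$. Order $n$ is exact because $C_N$ is nondegenerate, meaning the theta constants are linearly independent, which follows from their $q$-expansions.

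\textbf{Step 3: uniqueness and the Galois structure.} By \cite{SANABRIA2017}, two operators with the same Schwarz map image, up to projective equivalence, and the same quotient Schwarz map are projectively equivalent. This gives uniqueness. For the Picard--Vessiot extension $K$: it is generated over $\mathbb{C}(z)$ by the components $f\theta_i$. The ratios $\theta_i/\theta_j$ generate the function field $F$ of $X(N)$ (respectively $X_H(N)$). So $K=F(f\theta_1,\dots)$ is obtained from $F$ by adjoining a single element $f\theta_1$, whose $m$-th power lies in $F$ for $m=|Z(\hat G)|$. That makes $K/F$ a Kummer, hence Abelian, extension with group $Z(\hat G)\cap \mathrm{Gal}$.

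\textbf{Main obstacle.} I expect the hardest part to be Step 1's faithfulness and birationality, together with choosing the twist in Step 2 so that the coefficients are genuinely rational rather than merely algebraic. This is especially delicate in the even case, where the half-integral weight multiplier system for $H(N)$ involves eighth roots of unity. My first approach is to compute the multiplier of $\Theta$ explicitly on $S$ and $T$ via \cite{FARKAS1996} and to choose the exponents of $J$ and $J-1$ in $f$ to cancel the determinant character.
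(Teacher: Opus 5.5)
Your outline is correct, but Step 2 follows a different route from the paper. The paper never writes down an automorphic twist. It gets the equation from Theorem~\ref{theo0}: Hilbert's Theorem 90, applied to the cocycle $\sigma\mapsto(\tilde\sigma(y_1)/y_1)^m$ of $PG=\mathrm{Gal}(\mathbb{C}(C_N)/K)$, produces $f$ with $f^m\in\mathbb{C}(C_N)$ such that $fV/y_1$ is $PG^{SL}$-equivariant to $V$. Then $K[fV/y_1]$ is Galois over $K$ with group $PG^{SL}$, so $fV/y_1$ is the solution space of an order-$n$ equation over $K$. Moreover $K=\mathbb{C}(z)$ because $C_N/G$ is rational. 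Uniqueness is the paper's first corollary to Theorem~\ref{theo0}, which rescales the invariant values $f_i$ by $h^{d_i}$. Abelianness is read off from the Galois group being a central extension.

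You instead realize the twist analytically, as an explicit factor of weight $-1/2$ built from $J$. Finiteness of the monodromy then comes from finite projective image plus root-of-unity scalars, and rationality comes from single-valuedness of the Wronskian ratios.
\begin{itemize}
\item Your route buys explicitness: the local exponents at $z=0,1,\infty$ can be read off from the twist and the $q$-expansions. Your Kummer argument, that $\mathrm{Gal}(E/F)$ is the group of scalar matrices in the Galois group, also spells out what the paper leaves to a one-line remark.
\item The paper's route buys generality: it works for any finite $G$ and any nondegenerate invariant curve, with no modular forms needed. It lands directly on the minimal central extension $PG^{SL}$ without tuning scalars. It also produces invariant values $f_i\in K$, which are exactly the input to the Gr\"obner-basis algorithm used for $X(9)$.
\end{itemize}

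\textbf{Wrong exponent in the twist.} $dJ/d\tau$ has weight $2$, so $(dJ/d\tau)^{-1/2}$ has weight $-1$. Then $f\Theta$ picks up the non-constant factor $(c\tau+d)^{-1/2}$ under continuation. A non-constant multiplier cannot preserve a finite-dimensional space of functions, so the span of $f\Theta$ is not monodromy-stable. Consequently the order-$n$ operator annihilating it would not have single-valued coefficients. You need $(dJ/d\tau)^{-1/4}$. In fact $(dJ/d\tau)^{-1/4}J^{1/6}(J-1)^{1/8}$ is a constant multiple of $\eta(\tau)^{-1}$. So $\Theta/\eta$ is the cleanest choice: a vector of weight-$0$ functions on which $PSL_2(\mathbb{Z})$ acts through a finite matrix group.

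\textbf{Moderate growth.} State the moderate-growth condition at $z=0,1,\infty$. It follows from the $q$-expansions at the cusp and from holomorphy at the elliptic points. Single-valuedness on $\mathbb{P}^1\setminus\{0,1,\infty\}$ alone does not make the coefficients rational.
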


  The present paper is devoted to the construction of these equations and is organized as follows. Section 1 is dedicated to the preliminary geometrical and analytical framework; we recall the necessary properties of theta functions with characteristics and employ them to define an explicit invariant projective model for the modular curves of level $N$, followed by a review of the relevant structure of orbit spaces and Schwarz maps. In Section 2, we establish the core theoretical results of the paper, generalizing principles from algebraic Picard-Vessiot theory to central extensions, and we prove the existence of the desired canonical linear ordinary differential equations with prescribed projective monodromy and its universal property. Finally, in Section 3, as a concrete application of this general mechanism, we conclude by explicitly constructing a fourth-order linear ordinary differential equation associated with the modular curve of level $9$.

\section{Preliminaries}

  Let $\mathbb{H}$ denote the upper half-plane and let $\tau\in\mathbb{H}$. The group $\Gamma(1)=SL_2(\mathbb{Z})$, which is generated by $\gamma_S=\begin{pmatrix} 0 & 1 \\ -1 & 0 \end{pmatrix}$ and $\gamma_T=\begin{pmatrix} 1 & 1 \\ 0 & 1 \end{pmatrix}$, acts on $\mathbb{H}$ by fractional linear transformations
  \[
     \gamma\tau=\frac{a\tau+b}{c\tau+d},
     \qquad 
     \gamma=\begin{pmatrix} a & b \\ c & d \end{pmatrix}.
  \]
  Given an integer $N\geq 3$, we denote by $\Gamma(N)$ the principal congruence subgroup of level $N$ (i.e., $a,d\equiv 1\pmod N$, $b,c\equiv 0\pmod N$), and by $X(N)=\mathbb{H}/\Gamma(N)$ the compact modular curve associated with $\Gamma(N)$. Similarly, we denote by $H(N)$ the subgroup $\Gamma(N)\cap\Gamma_0^0(2N)$ (i.e., $a,d\equiv 1\pmod N$, $b,c\equiv 0\pmod {2N}$), and by $X_H(N)=\mathbb{H}/H(N)$ the compact modular curve associated with $H(N)$.

\subsection{Theta functions with characteristics}\label{thetasec}

  We recall the definition of theta functions with characteristics and the transformation properties required to construct our geometric models. A detailed account of this theory is given in \cite{FARKAS2001}.

  For $a,b\in\mathbb{R}$, we define the theta function with characteristic $[a,b]$ by
  \begin{align*}
  \theta\!\begin{bmatrix} a \\ b \end{bmatrix}(z,\tau) & 
  =
  \sum_{n\in\mathbb{Z}}
  \exp\!\Big(
      \pi i (n+a)^2 \tau
      + 2\pi i (n+a)(z+b)
  \Big) \\
   & =e^{\pi i a^2 \tau + 2\pi i a(z+b)}\vartheta(z+a\tau+b,\tau),
  \end{align*}
  where $z\in\mathbb{C}$ and $\vartheta(z,\tau)$ is the classical theta function with characteristic $[0,0]$. In particular, given two characteristics $[a_0,b_0]$ and $[a_1,b_1]$, we have the relation
  \[
  \theta\!\begin{bmatrix} a_1 \\ b_1 \end{bmatrix}(z,\tau) = e^{\pi i (\Delta a)^2 \tau + 2\pi i \Delta a(z+ b_1)} \theta\!\begin{bmatrix} a_0 \\ b_0 \end{bmatrix}(z+\Delta a\tau+\Delta b,\tau),
  \]
  where $\Delta a = a_1-a_0$ and $\Delta b = b_1-b_0$. Note that our choice of $a$ and $b$ in the series corresponds to $a/2$ and $b/2$ under the conventions of \cite{FARKAS2001}.

  We shall also require the following standard identities:
  \begin{align*}
    \theta\!\begin{bmatrix} a+m \\ b+n \end{bmatrix}(z,\tau) & = e^{2\pi i a n} \theta\!\begin{bmatrix} a \\ b \end{bmatrix}(z,\tau) \quad \text{for } m,n\in\mathbb{Z}, \quad \text{and} \\
    \theta\!\begin{bmatrix} -a \\ -b \end{bmatrix}(z,\tau) & = \theta\!\begin{bmatrix} a \\ b \end{bmatrix}(-z,\tau).
  \end{align*}

  Theta functions with characteristics are a well-known family of holomorphic functions used to construct explicit functions on elliptic curves. Indeed, if we fix $\tau\in\mathbb{H}$, the function $\theta\!\begin{bmatrix} a \\ b \end{bmatrix}(z,\tau)$ is a holomorphic function of $z$ that satisfies the quasi-periodicity relations
  \begin{align*}
     \theta\!\begin{bmatrix} a \\ b \end{bmatrix}(z+1,\tau) &= \theta\!\begin{bmatrix} a \\ b \end{bmatrix}(z,\tau), \\
     \theta\!\begin{bmatrix} a \\ b \end{bmatrix}(z+\tau,\tau) &= e^{-\pi i \tau - 2\pi i (a+b)} \theta\!\begin{bmatrix} a \\ b \end{bmatrix}(z,\tau).
  \end{align*}
  In particular, the quotient of two theta functions with characteristics $[a,b]$ and $[a',b']$ satisfying $a+b\equiv a'+b'\pmod 1$ defines a meromorphic function on the elliptic curve $E_\tau=\mathbb{C}/(\mathbb{Z}+\tau\mathbb{Z})$. The quotient
  $$\theta\!\begin{bmatrix} a \\ b \end{bmatrix}(z,\tau)\Bigg/ \theta\!\begin{bmatrix} a' \\ b' \end{bmatrix}(z,\tau)$$
  has a simple zero at the point $(1/2-a)\tau+(1/2-b)$ and a simple pole at $(1/2-a')\tau+(1/2-b')$, provided these two points do not coincide. More generally, by virtue of these quasi-periodicity relations, theta functions with characteristics define holomorphic sections of line bundles over $E_\tau$.

  Viewed as functions of $\tau\in\mathbb{H}$, they satisfy the following transformation laws under the action of $SL_2(\mathbb{Z})$:
  \begin{align*}
    \theta\!\begin{bmatrix} a \\ b \end{bmatrix}(z,\tau+1) & = e^{-\pi i a(a+1)} \theta\!\begin{bmatrix} a \\ b+a-1/2 \end{bmatrix}(z,\tau), \\
    \theta\!\begin{bmatrix} a \\ b \end{bmatrix}\left(\frac{z}{\tau},-\frac{1}{\tau}\right) & = \sqrt{-i\tau} e^{\pi i z^2/\tau - 2\pi i ab} \theta\!\begin{bmatrix} -b \\ a \end{bmatrix}(z,\tau).
  \end{align*}
  These transformation properties allow us to construct explicit models of modular curves and to analyze the corresponding actions of the modular group. To this end, we consider the values of these functions at $z=0$.

  \begin{defn}\label{thetadef}
  Given $a,b\in\mathbb{Q}$, the value at $z=0$,
  \[
     \theta_{a,b}(\tau)=\theta\!\begin{bmatrix} a \\ b \end{bmatrix}(0,\tau),
  \]
  is called a theta constant (or \emph{Thetanullwert}) with characteristic $[a,b]$.
  \end{defn}

  Our model of $X(N)$ relies on the decomposition of the theta series into residue classes modulo $N$. More precisely, for $a,b\in\mathbb{Q}$, we have
  \begin{align*}
    \theta_{a,b}(\tau) & = \sum_{l=0}^{N-1} \theta_{\frac{l+a}{N},Nb}(N^2\tau).
  \end{align*}

  We now specialize to the specific families of theta constants required for our constructions (cf. \cite[Chapter 3]{FARKAS2001}).

  \subsection*{Odd $N$ case}

  Let $N\geq 3$ be an odd integer. We consider the $M=\frac{N-1}{2}$ theta constants of level $N$ defined by
  \[
  \phi_l(\tau)=\theta_{\frac{2l+1}{2N},\frac{1}{2}}(N\tau),\qquad l=0,1,\ldots,\frac{N-3}{2}.
  \]
  These constants span a finite-dimensional vector space that is stable under the natural action of the modular group.

  The transformation laws of the theta functions with characteristics induce a projective action on the space spanned by the ordered components of the level $N$ theta constants. Specifically, if we fix the ordering
  \[
  \Phi_N(\tau)=\left[\phi_0(\tau),\ldots,\phi_{(N-3)/2}(\tau)\right],
  \]
  it transforms under $\gamma_S$ and $\gamma_T$ according to
  \[
    \Phi_N(\gamma_S\tau)= \kappa(\gamma_S,\tau)\,\Phi_N(\tau)\, S_*
    \quad \text{and} \quad
    \Phi_N(\gamma_T\tau)= \kappa(\gamma_T,\tau)\,\Phi_N(\tau)\, T_*,
  \]
  where $\kappa(\gamma,\tau)$ is a scalar automorphy factor, and the matrices $S_*$ and $T_*$ are given by
  \begin{align*}
    S_*= & \left[e^{2l(k+1)2\pi i/2N}+e^{-(2k(l+1)+1)2\pi i/2N}\right]_{l,k=0}^{(N-3)/2},\\
    T_*= & \mathrm{diag}\!\left(e^{(l^2+l) 2\pi i/2N}\right)_{l=0}^{(N-3)/2}.
  \end{align*}

  The matrix $S_*$ has order $2$, $T_*$ has order $N$, and the map $\rho_N : SL_2(\mathbb{Z}) \longrightarrow PGL_M(\mathbb{C})$ defined by $\rho_N(\gamma_S)=S_*$ and $\rho_N(\gamma_T)=T_*$ yields a projective representation whose projective kernel is $\pm\Gamma(N)$. Letting $\tilde{\Gamma}(N)=\pm\Gamma(N)/\pm I$, the map $\rho_N$ factors through a faithful projective representation of the finite quotient group $PSL_2(\mathbb{Z}/N\mathbb{Z})\simeq PSL_2(\mathbb{Z})/\tilde{\Gamma}(N)$.

  \subsection*{Even $N$ case}

  Let $N\geq 4$ be an even integer. We consider the $M=\frac{N}{2}+1$ theta constants of level $N$ defined by
  \[
  \phi_l(\tau)=\theta_{\frac{2l}{2N},0}(N\tau),\qquad l=0,1,\ldots,\frac{N}{2}.
  \]
  As in the odd case, these constants span a finite-dimensional vector space stable under the natural action of the modular group. Under the fixed ordering
  \[
  \Phi_N(\tau)=\left[\phi_0(\tau),\ldots,\phi_{N/2}(\tau)\right],
  \]
  the vector transforms under $\gamma_S$ and $\gamma_T$ via
  \[
    \Phi_N(\gamma_S\tau)= \kappa(\gamma_S,\tau)\,\Phi_N(\tau)\, S_*
    \quad \text{and} \quad
    \Phi_N(\gamma_T\tau)= \kappa(\gamma_T,\tau)\,\Phi_N(\tau)\, T_*,
  \]
  where $\kappa(\gamma,\tau)$ is a scalar automorphy factor and
  \begin{align*}
    S_*= & \left[\epsilon_k\left(e^{2lk 2\pi i/2N}+e^{-2lk 2\pi i/2N}\right)\right]_{l,k=0}^{N/2},\\
    T_*= & \mathrm{diag}\!\left(e^{l^2 2\pi i/2N}\right)_{l=0}^{N/2},
  \end{align*}
  with $\epsilon_k=1$ if $k\ne 0,N/2$ and $\epsilon_k=1/2$ otherwise.
  In this framework, $S_*$ has order $2$ and $T_*$ has order $2N$. The map $\rho_N : SL_2(\mathbb{Z}) \longrightarrow PGL_M(\mathbb{C})$ defined by $\rho_N(\gamma_S)=S_*$ and $\rho_N(\gamma_T)=T_*$ yields a projective representation whose projective kernel is $\pm H(N)$. It consequently factors through a faithful projective representation of the finite group $PSL_2(\mathbb{Z})/\tilde{H}(N)$, where $\tilde{H}(N)=\pm H(N)/\pm I$.

\subsection{A model for the modular curve of level $N$}\label{modelsec}

  The faithful projective representation constructed above establishes the geometric framework for the uniform structural results required in the sequel.

  \begin{thm}\label{uniformthm}
  The theta constants with characteristics of level $N\ge 3$ define a holomorphic map
  \[
     \Phi_N : \mathbb{H} \longrightarrow \mathbb{P}^{M-1}(\mathbb{C}),
  \]
  where $M=(N-1)/2$ if $N$ is odd (respectively, $M=N/2+1$ if $N$ is even). This map is invariant under $\Gamma(N)$ when $N$ is odd, and induces an embedding
  \[
     X(N) \hookrightarrow \mathbb{P}^{M-1}(\mathbb{C}) \qquad (\text{respectively, } X_H(N) \hookrightarrow \mathbb{P}^{M-1}(\mathbb{C}))
  \]
  which is equivariant with respect to the natural action of $PSL_2(\mathbb{Z}/N\mathbb{Z})$ on $X(N)$ and the projective representation induced by $\rho_N$ on $\mathbb{P}^{M-1}(\mathbb{C})$ (respectively, the natural action of $PSL_2(\mathbb{Z})/\tilde{H}(N)$ on $X_H(N)$ and the projective representation induced by $\rho_N$ on $\mathbb{P}^{M-1}(\mathbb{C})$).
  \end{thm}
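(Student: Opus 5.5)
Our plan is to split the theorem into three claims, in this order: (i) $\Phi_N$ is a well-defined holomorphic map to $\mathbb{P}^{M-1}(\mathbb{C})$ that descends to the compactified quotient; (ii) the induced map is equivariant; (iii) the induced map is injective and immersive, hence an embedding.

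\textbf{Well-definedness, holomorphy, invariance.} Each $\phi_l$ is holomorphic on $\mathbb{H}$, because the theta series converges locally uniformly. To get a map to projective space we must show the $\phi_l$ have no common zero on $\mathbb{H}$. For this we use the zero description of Section~\ref{thetasec}. As a function of $z$, $\theta[a,b](z,N\tau)$ vanishes only at $z\equiv(1/2-a)N\tau+(1/2-b)$. With $z=0$, $a=\frac{2l+1}{2N}$ (odd case) or $a=\frac{l}{N}$ (even case), this cannot occur: the coefficient $(1/2-a)N$ of $\tau$ lies strictly between consecutive integers, or in the even case with $l=0$ or $N/2$ we check directly that at least one other $\phi_k$ is nonzero. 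So $\Phi_N$ is holomorphic into $\mathbb{P}^{M-1}$.

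Invariance comes from the transformation formulas. We have $\Phi_N(\gamma\tau)=\kappa(\gamma,\tau)\Phi_N(\tau)\rho_N(\gamma)$ for the generators, and by the cocycle property for all $\gamma\in SL_2(\mathbb{Z})$ (with some lift of $\rho_N(\gamma)$). Since the projective kernel of $\rho_N$ is $\pm\Gamma(N)$ (respectively $\pm H(N)$), as stated in Section~\ref{thetasec}, the map factors through $\mathbb{H}/\Gamma(N)$ (respectively $\mathbb{H}/H(N)$). The same identity, read projectively, gives equivariance: $\Phi_N(\gamma\tau)=\Phi_N(\tau)\cdot\rho_N(\gamma)$. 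The quotient group acts compatibly on both sides.

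\textbf{Extension to cusps.} Near $\infty$ we use the $q$-expansions in $q_N=e^{2\pi i\tau/N}$ (or $q_{2N}$). Each $\phi_l$ equals $q^{\alpha_l}$ times a power series with nonzero constant term. Dividing by the smallest $q^{\alpha_l}$ gives a holomorphic extension at the cusp $\infty$. Every other cusp is an $SL_2(\mathbb{Z})$-translate of $\infty$, so equivariance extends the map there too. Thus we get a holomorphic, hence algebraic, map from the compact curve $X(N)$ (or $X_H(N)$) to $\mathbb{P}^{M-1}$.

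\textbf{Embedding (the main obstacle).} It remains to prove injectivity and immersivity, and this is the hard step. We would first show the map is generically injective, i.e.\ birational onto its image. The field generated by the ratios $\phi_k/\phi_0$ is invariant exactly under $\pm\Gamma(N)$: any $\gamma$ fixing all ratios lies in the projective kernel of $\rho_N$, by faithfulness. Combined with $\mathbb{C}(j)\subset\mathbb{C}(\phi_k/\phi_0)$ (since $j$ can be written as a $\rho_N$-invariant rational expression, e.g.\ via Klein-type invariants), Galois theory of the modular function field gives $\mathbb{C}(\phi_k/\phi_0)=\mathbb{C}(X(N))$.

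For injectivity and smoothness everywhere we would argue by equivariance. A failure of injectivity or immersivity at one point propagates along its whole orbit. We would then rule it out at the cusps by explicit $q$-expansion leading exponents: the distinct values of $\alpha_l$ give local separation of branches and a nonvanishing derivative. At interior points we would rule it out either by comparing degrees, matching the degree of the image curve $\sum$ (zeros of $\phi_0$) with the Riemann--Hurwitz genus, or by invoking the classical results of Farkas--Kopeliovich \cite{FARKAS1996} that these theta constants generate the relevant graded ring. We expect this global injectivity and smoothness, especially at elliptic-free interior points where no explicit expansion is available, to be the genuinely difficult part. We would lean on \cite{FARKAS1996} there.
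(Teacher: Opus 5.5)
Your steps (i)--(ii) cover essentially everything the paper itself offers. The paper gives no separate proof of this theorem. It rests on the transformation laws, on the assertion in Section~\ref{thetasec} that the projective kernel of $\rho_N$ is $\pm\Gamma(N)$ (resp.\ $\pm H(N)$), and implicitly on \cite{FARKAS1996} for the embedding.

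Two repairs are needed in those steps. First, for $\theta[a,b](\cdot,N\tau)$ the period lattice is $\mathbb{Z}+N\tau\mathbb{Z}$. So the zero $(1/2-a)N\tau+(1/2-b)$ avoids $0$ because $1/2-a\notin\mathbb{Z}$ in the odd case, and because $1/2-b=1/2\notin\mathbb{Z}$ in the even case, for every $l$ including $l=0,N/2$. Your stated criterion is actually false: $(1/2-a)N=(N-1)/2-l$ (resp.\ $N/2-l$) is an integer. The correct conclusion, which you will need below, is that every $\phi_l$ is zero-free on $\mathbb{H}$. Second, for $N=3$ one has $M=1$. Then $\rho_3$ is trivial, the target is $\mathbb{P}^0$, and the faithfulness you invoke fails, so the odd case really requires $N\ge 5$.

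The genuine gap is the embedding, and the arguments you sketch for it do not close it. Write $X$ for $X(N)$ (resp.\ $X_H(N)$), $Q$ for the finite quotient group, and $F=\mathbb{C}(\phi_k/\phi_0)$.

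\emph{Birationality.} Your step assumes $j\in F$, but this is equivalent to what you want to prove. $F$ is $Q$-stable and $Q$ acts on it faithfully, so $F^Q\subseteq\mathbb{C}(j)$ and $[\mathbb{C}(j):F^Q]=[\mathbb{C}(X):F]=\deg\Phi_N$. Faithfulness of $\rho_N$ by itself does not exclude a $Q$-equivariant map of degree $>1$, and you give no expression of $j$ in the $\phi_l$. A non-circular route uses what you already have:
\begin{itemize}
\item Since $\phi_1$ has no zeros on $\mathbb{H}$, only cusps can map to $\Phi_N(\infty)=[1:0:\cdots:0]$.
\item Since $\phi_1/\phi_0=c\,q+O(q^2)$ with $c\neq0$, in the local parameter $q=e^{2\pi i\tau/N}$ (resp.\ $e^{\pi i\tau/N}$), $\Phi_N$ is immersive at $\infty$.
\item Hence $\deg\Phi_N=1$ as soon as no other cusp $\gamma\infty$ is sent to $[1:0:\cdots:0]$. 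This means the $0$-th row of $\rho_N(\gamma)$ is not proportional to $(1,0,\dots,0)$, which is a finite check in the group.
\end{itemize}
For prime $N$ the Bruhat decomposition reduces this check to two facts: $S_*$ has no zero entries, and $\mathrm{diag}(a,a^{-1})$ moves $[1:0:\cdots:0]$ unless $a\equiv\pm1$.

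\emph{Interior points.} Injectivity and immersivity there are a separate problem. Equivariance only transports information along orbits, so your cusp computation says nothing about orbits of interior points. A degree--genus comparison proves smoothness only when $\deg C_N$ determines the arithmetic genus. That happens essentially for plane curves ($M\le 3$, i.e.\ $N=5,7$ and $N=4$), or when the ideal of $C_N$ is known, e.g.\ $C_9$, a complete intersection of two cubics with arithmetic genus $10=g(X(9))$. For general $N$ this smoothness has to be imported from \cite{FARKAS1996}, as the paper implicitly does. As written, your proposal does not establish it.
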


  In particular, the image $C_N$ of $\Phi_N$ is a model of $X(N)$ (respectively, $X_H(N)$), realized as a projective algebraic curve whose homogeneous coordinates are given by theta constants. The action of $SL_2(\mathbb{Z})$ on $\mathbb{H}$ induces, via $\rho_N$, an action of the finite group $PSL_2(\mathbb{Z}/N\mathbb{Z})$ (respectively, $PSL_2(\mathbb{Z})/\tilde{H}(N)$) on the model $C_N$.

  Consequently, the theta constants of level $N$ simultaneously yield:
  \begin{itemize}
  \item[(i)] a projective model $C_N$ of the modular curve $X(N)$ (respectively, $X_H(N)$);
  \item[(ii)] a projective representation of the finite modular group $PSL_2(\mathbb{Z}/N\mathbb{Z})$ (respectively, $PSL_2(\mathbb{Z})/\tilde{H}(N)$);
  \item[(iii)] an explicit system of coordinates in which the group action is realized by projective transformations.
  \end{itemize}

  The curve $C_N$, together with the projective representation $\rho_N$, serves as the geometric input enabling the construction, via Schwarz maps and algebraic Picard--Vessiot theory, of linear differential equations whose projective monodromy coincides with these prescribed modular transformation groups.

  \subsection*{Algebraic relations among theta constants}

  The algebraic relations among theta constants determine the defining equations of $C_N$ in projective space. More precisely, the homogeneous ideal of the model curve $C_N\subset\mathbb{P}^{M-1}(\mathbb{C})$ is generated by the homogeneous algebraic relations satisfied by these theta constants.
  
  For odd $N$, the homogeneous algebraic relations among the theta constants of level $N$ can be systematically obtained via the residue theorem and are described explicitly in \cite[Theorem 4.8]{QUINE1997}.
  
  Fix $\tau\in\mathbb{H}$. For $r=1,2\ldots,\frac{N-1}{2}$, let
  \[
  \psi_r(z)=\theta\!\begin{bmatrix} \frac{N-2r}{2N} \\ \frac{1}{2} \end{bmatrix}(z,N\tau),
  \]
  so that $\psi_r(z)$ has a zero at $z_r=(1/2-(N-2r)/2N)N\tau+(1/2-1/2)=r\tau$ and satisfies $\psi_r(0)=\phi_{\frac{N-1}{2}-r}(\tau)$. Moreover, given an integer $s$ with $1\le s\le (N-1)/2$, we have
  \[
  \psi_r(z_s)=
  \begin{cases}
    e^{-\pi i\frac{s^2\tau+s}{N}}\phi_{\frac{N-1}{2}-(r-s)}(\tau), & s<r, \\[8pt]
    -e^{-\pi i\frac{s^2\tau+s}{N}}e^{-2\pi i \frac{s-r}{N}}\phi_{\frac{N-1}{2}-(s-r)}(\tau), & s>r,
  \end{cases}
  \]
  and $\psi'_s(z_s)=e^{-\pi i\frac{s^2\tau+s}{N}}\theta_{\frac{1}{2},\frac{1}{2}}(N\tau)$. Now, given a collection of integers
  \[
  l_1,\ldots,l_m \in \left\{1,2,\ldots,\dfrac{N-1}{2}\right\}
  \]
  and a collection of distinct integers
  \[
  s_1,\ldots,s_m \in \left\{1,2,\ldots,\dfrac{N-1}{2}\right\}
  \]
  such that $\{l_1,\ldots,l_m\} \cap \{s_1,\ldots,s_m\} = \emptyset$ and
  \[
  \sum_{j=1}^m s_j \equiv \sum_{j=1}^m l_j \pmod N,
  \]
  the product
  \[
  F(z)=\prod_{j=1}^m \frac{\psi_{l_j}(z)}{\psi_{s_j}(z)}
  \]
  defines a meromorphic function on the elliptic curve $E_{N\tau}$ with simple poles at the points $z_{s_j}$. By the residue theorem, the sum of the residues of $F(z)$ at its poles vanishes, yielding the rational relation
  \[
  \sum_{j=1}^{m}\dfrac{\prod_{i} \psi_{l_i}(z_{s_j})}{\prod_{i\ne j} \psi_{s_i}(z_{s_j})}=0.
  \]
  Clearing denominators and rewriting the expression in terms of theta constants, these identities yield homogeneous algebraic dependencies among the projective coordinates
  \[
  [\theta_0(\tau):\theta_1(\tau):\ldots:\theta_{N-1}(\tau)],
  \]
  thereby defining elements in the homogeneous ideal of the model curve $C_N\subset\mathbb{P}^{M-1}(\mathbb{C})$.

  For small odd values of $N$, the ideal of the model curve $C_N$ is generated by these relations (see \cite[Chapter 3]{FARKAS2001}). For general $N$, and specifically for even $N$, the homogeneous relations among the theta constants of level $N$ require a more detailed analysis (cf. \cite[Chapter 10]{MUMFORD2007} and \cite{MUMFORD1966}).

  To prepare for the construction of differential equations with prescribed projective monodromy, we must analyze the geometry of the orbit spaces arising from actions of finite linear groups. We therefore recall some foundational facts concerning these orbit spaces, Schwarz maps, and the relevant results from algebraic Picard--Vessiot theory.

\subsection{Space of orbits}\label{orbitspace}

  We recall the basic structure of orbit spaces arising from linear actions of finite groups; a detailed treatment may be found in \cite{BRION2010}.

  Let $G\subseteq GL_n(\mathbb{C})$ be a finite group and let $R=\mathbb{C}[X_1,\ldots,X_n]$ be the coordinate ring of $\mathbb{C}^n$. The natural right action of $G$ on $R$ is given by
  \[
  g: X_j \mapsto \sum_{i=1}^n X_i g_{ij},
  \qquad g=(g_{ij})\in G.
  \]
  This induces a left action on $\mathbb{C}^n$ via
  \[
  g:(x_1,\ldots,x_n)\mapsto \Big(\sum_{i=1}^n x_i g^{-1}_{i1},\ldots,\sum_{i=1}^n x_i g^{-1}_{in}\Big),
  \]
  where $g^{-1}=(g^{-1}_{ij})$ denotes the inverse of $g$.

  Since $G$ is finite, it is reductive, and the invariant ring $R^G$ separates $G$-orbits. The affine orbit space $\mathbb{C}^n/G$ is therefore identified with $\mathrm{Spec}(R^G)$.

  Let $F_1,\ldots,F_r$ be homogeneous generators of $R^G$. The quotient map
  \[
  \mathbf{x} \longmapsto \big(F_1(\mathbf{x}),\ldots,F_r(\mathbf{x})\big)
  \]
  embeds $\mathbb{C}^n/G$ as an affine algebraic subvariety of $\mathbb{C}^r$. There exists a Zariski-open dense subset of $\mathbb{C}^n$ on which this map has maximal rank.

  The same construction applies in projective space. The group $G$ acts on $\mathbb{P}^{n-1}(\mathbb{C})$ and on its homogeneous coordinate ring. Let $\mathbb{P}^{n-1}(\mathbb{C})/G$ denote the corresponding orbit space and let
  \[
  \Pi:\mathbb{P}^{n-1}(\mathbb{C})\longrightarrow \mathbb{P}^{n-1}(\mathbb{C})/G
  \]
  be the canonical quotient map.

  Let $\Lambda\in\mathbb{Z}_{>0}$ be an integer such that the space $R^G_\Lambda$ of homogeneous invariants of degree $\Lambda$ defines a projective coordinate system for the quotient, i.e.,
  \[
  \mathbb{P}(R^G_\Lambda)\simeq \mathbb{P}^{n-1}(\mathbb{C})/G.
  \]
  If $\Phi_1,\ldots,\Phi_s$ is a basis of $R^G_\Lambda$, the map
  \[
  [\mathbf{x}] \longmapsto \big[\Phi_1(\mathbf{x}):\cdots:\Phi_s(\mathbf{x})\big]
  \]
  embeds $\mathbb{P}^{n-1}(\mathbb{C})/G$ into $\mathbb{P}^{s-1}(\mathbb{C})$. As in the affine setting, this map achieves maximal rank on a dense open subset.

\subsection{Schwarz maps}

  Let $C_0$ be a compact Riemann surface and $K=\mathbb{C}(C_0)$ its field of meromorphic functions. Let $\delta:K\to K$ be a non-trivial derivation.

  Consider a linear ordinary differential equation
  \[
  L(y)=\delta^n(y)+a_{n-1}\delta^{n-1}(y)+\cdots+a_0 y=0,
  \qquad a_i\in K,
  \]
  and denote by $S_0\subset C_0$ its set of singular points.

  Given a non-singular point $p\in C_0\setminus S_0$ and a basis of local solutions $\mathbf{y}=(y_1,\ldots,y_n)$ defined on a neighborhood $U$ of $p$, the \emph{Schwarz map} is defined as the analytic continuation of
  \[
  [\mathbf{y}]: U \longrightarrow \mathbb{P}^{n-1}(\mathbb{C}),
  \qquad z \longmapsto [y_1(z):\cdots:y_n(z)].
  \]

  The projective monodromy group $PG\subset PGL_n(\mathbb{C})$ is the image of the monodromy representation associated with $\mathbf{y}$. When $PG$ is finite, composition with the quotient map yields a rational map
  \[
  \psi: C_0 \dashrightarrow \mathbb{P}^{n-1}(\mathbb{C})/PG,
  \]
  defined away from $S_0$, which we call the \emph{quotient Schwarz map}.

\subsection{Algebraic Picard--Vessiot theory}

  We recall the aspects of Picard--Vessiot theory relevant to the algebraic case; see \cite{VANDERPUT2003} for a foundational reference.

  Assume that $L(y)=0$ is irreducible and that all its solutions are algebraic over $K$. If $(y_1,\ldots,y_n)$ is a basis of solutions, the Picard--Vessiot extension is given by
  \[
  F = K[y_1,\ldots,y_n].
  \]

  Let $I$ be the kernel of the $K$-morphism
  \[
  \Phi: K[Y_1,\ldots,Y_n] \longrightarrow F,
  \qquad Y_j \mapsto y_j.
  \]
  The differential Galois group is identified with the finite subgroup $G\subset GL_n(\mathbb{C})$ consisting of elements $g=(g_{ij})$ preserving $I$ under the action
  \[
  Y_j \longmapsto \sum_{i=1}^n Y_i g_{ij}.
  \]

  If $P\in K[Y_1,\ldots,Y_n]$ is $G$-invariant, then $\Phi(P)\in K$. Since $L(y)=0$ is irreducible, the group $G$ is reductive.

  \begin{thm}\label{compointthm}
  Let $L(y)=0$ be an irreducible linear ordinary differential equation with finite differential Galois group $G$. Then the ideal $I$ is generated by its $G$-invariant elements. In particular, if $P_1,\ldots,P_r$ generate $\mathbb{C}[Y_1,\ldots,Y_n]^G$, then
  \[
  I=\langle P_1-f_1,\ldots,P_r-f_r\rangle,
  \]
  where $f_i=\Phi(P_i)$.
  \end{thm}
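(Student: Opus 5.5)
The plan is to prove the equality $I=J$, where $J\subseteq K[Y_1,\ldots,Y_n]$ is the ideal generated by the $G$-invariant elements of $I$. We pass to the Picard--Vessiot field $F$, where both ideals can be described geometrically, and then descend back to $K$.

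First, the easy inclusions and reductions. Since $I$ is $G$-stable, $J\subseteq I$. If $P\in K[Y]^G=K\otimes_{\mathbb{C}}\mathbb{C}[Y]^G$, then $\Phi(P)\in K$ is an element of $F$ fixed by $G$. Hence $P-\Phi(P)\in I^G$. Writing $P$ as a $K$-polynomial in the generators $P_1,\ldots,P_r$, we get $P-\Phi(P)\in\langle P_1-f_1,\ldots,P_r-f_r\rangle$. So the ideal $\langle P_1-f_1,\ldots,P_r-f_r\rangle$ contains every invariant of $I$ and is contained in $J$; therefore it equals $J$, which gives the second assertion once $J=I$ is known. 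Now, $F$ is a field extension of $K$, hence faithfully flat over $K$. It therefore suffices to prove $J\,F[Y]=I\,F[Y]$, since then $J=JF[Y]\cap K[Y]=IF[Y]\cap K[Y]=I$.

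Second, describe $IF[Y]$. Let $\mathbf{y}=(y_1,\ldots,y_n)\in F^n$ be the fundamental solution. By the torsor property of Picard--Vessiot extensions (or directly, since $F/K$ is Galois with group $G$ and $F=K[\mathbf{y}]$), we have
\[
F[Y]/IF[Y]\simeq F\otimes_K F\simeq \prod_{g\in G}F .
\]
The factor indexed by $g$ corresponds to the evaluation $Y\mapsto \mathbf{y}\cdot g$. Thus $IF[Y]$ is the (radical) ideal of the finite reduced set of $F$-points $\{\mathbf{y}\cdot g: g\in G\}$, which is the $G$-orbit of $\mathbf{y}$.

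Third, describe $JF[Y]=\langle P_i-P_i(\mathbf{y})\rangle F[Y]$. This is the scheme-theoretic fibre of the quotient morphism $\mathbb{A}^n_F\to\mathbb{A}^n_F/G$ over the image of $\mathbf{y}$. Since invariants separate orbits (Section~\ref{orbitspace}), its zero set over $\overline{F}$ is exactly the orbit of $\mathbf{y}$. Hence $\sqrt{JF[Y]}=IF[Y]$.

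The main obstacle is showing that this fibre is reduced. My approach is to use that the stabilizer of $\mathbf{y}$ in $G$ is trivial. Indeed, if $\mathbf{y}g=\mathbf{y}$, then $g$ fixes $F=K[\mathbf{y}]$ pointwise, so $g=1$ by Galois correspondence. For a finite group in characteristic $0$, the quotient map is étale at points with trivial stabilizer (Luna's étale slice theorem, or a direct argument via the free locus). So the fibre through $\mathbf{y}$ is a reduced scheme consisting of $|G|$ points. Alternatively, one can count lengths: the fibre has length $|G|$ at a free orbit, matching $\dim_F F[Y]/IF[Y]=|G|$. Either route gives $JF[Y]=IF[Y]$, and faithful flatness concludes the proof. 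If the étale argument needs care over the non-closed field $F$, I would first base change to $\overline{F}$, prove reducedness there, and then descend.
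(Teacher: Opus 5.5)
The paper does not prove this statement. It quotes it as Compoint's theorem with a pointer to van der Put--Singer, so there is no internal argument to compare with. Your proof is correct and self-contained. After the (correct) identification $J=\langle P_1-f_1,\ldots,P_r-f_r\rangle$ and faithfully flat base change to $F$, the isomorphism $F\otimes_K F\simeq\prod_{g\in G}F$ identifies $IF[Y]$ with the ideal of the orbit $\mathbf{y}G$. That orbit is free because $F=K[\mathbf{y}]$. The same base change identifies $JF[Y]$ with the fibre of $\mathbb{A}^n_F\to\mathbb{A}^n_F/G$ through $\mathbf{y}$, which is reduced because a finite quotient map is \'etale at points with trivial stabiliser. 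This is the standard geometric route, specialised to finite $G$. Compared with the bare citation, it buys two things. It proves exactly the version stated here, for relations among $y_1,\ldots,y_n$ alone rather than among the entries of a fundamental matrix. It also shows that nothing beyond finiteness of $G$ is used: neither irreducibility of $L$ nor the unimodularity hypothesis needed in the reductive setting, since finite orbits are automatically closed.

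A few small points.

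First, the paper defines $G$ as the stabiliser of $I$ in $GL_n(\mathbb{C})$, whereas you use $G=\mathrm{Gal}(F/K)$ acting faithfully by $\mathbf{y}\mapsto\mathbf{y}g$. These agree. Any $g$ preserving $I$ induces a $K$-algebra endomorphism of the field $F=K[Y]/I$. It is an automorphism because $[F:K]<\infty$, and it commutes with $\delta$ because $F/K$ is separable algebraic.

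Second, your length-count alternative is not independent of the \'etaleness step. $\mathbb{C}[Y]$ is flat over $\mathbb{C}[Y]^G$ only when $G$ is generated by pseudo-reflections. So knowing that the fibre through $\mathbf{y}$ has length $|G|$ already uses flatness of the quotient map at the free point.

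Third, if you want to avoid quoting \'etaleness at all, there is a short argument directly over $K$. Choose a linear form $u\in\mathbb{C}[Y]$ taking pairwise distinct values on the points $\mathbf{y}g$. The polynomials $\chi(T)=\prod_g(T-g\cdot u)$ and $\Lambda_j(T)=\sum_g(g\cdot Y_j)\prod_{h\ne g}(T-h\cdot u)$ have invariant coefficients, so these coefficients are congruent modulo $J$ to elements of $K$. One has $\chi(u)=0$ and $Y_j\,\chi'(u)=\Lambda_j(u)$. Moreover, $\chi'(u)$ divides the invariant $\prod_g g\cdot\chi'(u)$, whose value at $\mathbf{y}$ is nonzero. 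Hence, modulo $J$:
\begin{itemize}
\item the class of $u$ satisfies a monic polynomial of degree $|G|$ over $K$;
\item $\chi'(u)$ is a unit, with inverse in the finite-dimensional algebra generated by $u$;
\item each $Y_j$ is a polynomial in $u$.
\end{itemize}
Therefore $\dim_K K[Y]/J\le|G|=[F:K]$, and the surjection $K[Y]/J\to K[Y]/I\simeq F$ is an isomorphism.
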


\subsection{Schwarz maps for invariant curves}\label{schwarzinvsec}

  We now specialize the previous constructions to invariant curves.  
  Let $G\subset GL_n(\mathbb{C})$ be a finite group acting on $\mathbb{P}^{n-1}(\mathbb{C})$, and let
  \[
  C \subset \mathbb{P}^{n-1}(\mathbb{C})
  \]
  be an algebraic, $G$-invariant, irreducible curve not contained in any projective hyperplane. Let $\nu:C_0\to C/G$ be a normalization of the quotient curve and set $K=\mathbb{C}(C_0)$.

  The following result shows that invariant curves arise naturally as images of Schwarz maps of linear differential equations.

  \begin{thm}[{\cite[Theorem 2]{SANABRIA2024}}]\label{theoabelianext}
  Let $G$ be a finite algebraic subgroup of $GL_n(\mathbb{C})$ and let 
  $C\subseteq\mathbb{P}^{n-1}(\mathbb{C})$ be an algebraic, $G$-invariant, irreducible curve not contained in a projective hyperplane. Let $\nu:C_0\rightarrow C/G$ be a normalization and set $K=\mathbb{C}(C_0)$. Then there exist:
  \begin{itemize}
  \item[(i)] a branched covering $\pi:C_1\rightarrow C_0$ such that 
  $E=\mathbb{C}(C_1)$ is an abelian extension of $K$;

  \item[(ii)] a linear differential equation of order $n$ with coefficients in $E$
  whose Schwarz map parametrizes $C$, and whose quotient Schwarz map is
  $\psi=\nu\circ\pi$.
  \end{itemize}
  \end{thm}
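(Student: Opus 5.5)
The plan is to build the equation from the curve itself: lift the coordinates of $C$ to a multivalued vector $\mathbf{y}=(y_1,\ldots,y_n)$ on $C_0$ and then take $L$ to be the monic operator whose solution space is spanned by the $y_j$. Start from the generic point of $C$: choose homogeneous coordinates $[x_1:\cdots:x_n]$ with $x_1\neq 0$ on $C$, so that $\mathbb{C}(C)=\mathbb{C}(x_2/x_1,\ldots,x_n/x_1)$. Since $G$ is finite, $\mathbb{C}(C)/\mathbb{C}(C)^{G}$ is Galois, and $\mathbb{C}(C)^G=\mathbb{C}(C/G)$ is identified with $K$ via $\nu$. The difficulty is that the ratios $x_j/x_1$ are not solutions of a linear equation. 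We need an affine lift $y_j=\lambda x_j$ such that $G$ acts linearly on $(y_1,\ldots,y_n)$ and not merely projectively.

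To get this lift, pick a homogeneous $G$-semi-invariant $P$ of some degree $d$ that does not vanish identically on $C$. For instance, $P$ can be a product over a $G$-orbit of linear forms not vanishing on $C$, which is possible because $C$ is not contained in a hyperplane. Set $\lambda^{d}=1/P(x_1,\ldots,x_n)$, so that $\lambda=P(x)^{-1/d}$ and $y_j=x_j\,P(x)^{-1/d}$. Then $P(y)=1$, and every degree-$d$ invariant evaluated at $\mathbf{y}$ is a degree-$0$ rational function of $x$ that is $G$-invariant, hence lies in $K$ (up to the character of $P$).

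Define $C_1$ by taking $E=K(P_1(\mathbf{y}),\ldots)$. This is the field generated over $K$ by the values of all invariants and by the extra root $\lambda$, modulo the $G$-action. The step $\mathbb{C}(C)(\lambda)$ over $\mathbb{C}(C)$ is Kummer. The key point is that the part of it that is not absorbed by $G$ is controlled by the characters of $G$ together with scalars in $G\cap\mathbb{C}^*$. Consequently $E/K$ is generated by $d$-th roots of elements of $K$, and is therefore abelian; this gives (i). I expect this to be the main obstacle. One must check that $K(\mathbf{y})$ is Galois over $E$ with group exactly $G$ acting linearly. One must also check that $E/K$ is abelian, i.e. that the commutator obstruction vanishes. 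The approach is to write $\mathrm{Gal}(K(\mathbf{y})/K)$ as a central extension of $G/(G\cap\mathbb{C}^*)$ by $\mu_d$-scalars, and then take $E$ to be the fixed field of the preimage of $G$.

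For (ii), let $\delta$ be a derivation of $E$ (extended uniquely to $K(\mathbf{y})$, which is algebraic over $E$). Consider the Wronskian $W(\mathbf{y})=\det(\delta^{i}y_j)$. It is nonzero because $C$ lies in no hyperplane, so the $y_j$ are linearly independent over $\mathbb{C}$. Define
\[
L(y)=\frac{\det\big(\delta^{i}y_j,\ \delta^i y\big)}{W(\mathbf{y})}.
\]
Each $g\in G$ multiplies both determinants by $\det g$, so the coefficients of $L$ are fixed by $\mathrm{Gal}(K(\mathbf{y})/E)=G$ and hence lie in $E$. Its solution space is $\mathrm{span}_{\mathbb{C}}(y_1,\ldots,y_n)$, so its Schwarz map is $[\mathbf{y}]=[\mathbf{x}]$, which parametrizes $C$. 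Composing with $\Pi$ sends the generic point of $C_1$ to the class of $\mathbf{x}$ in $C/G$, which is exactly $\nu\circ\pi$ on function fields. Finally, by Theorem~\ref{compointthm} the Picard--Vessiot ideal is cut out by invariants, which confirms that the differential Galois group is $G$ and that no smaller extension is needed.
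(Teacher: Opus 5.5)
Your route works, but the step you flag as the main obstacle is false as stated, and part (ii) relies on it. An element $g\in G$ acts on $K(\mathbf{y})=\mathbb{C}(C)(\lambda)$ by $\mathbf{y}\mapsto\mathbf{y}g$ only if some automorphism sends $\lambda\mapsto\lambda\,(\mathbf{x}g)_1/x_1$. For a scalar $g=\zeta I\in G$ this means $\lambda\mapsto\zeta\lambda$. That requires $\zeta\in\mathrm{Gal}(\mathbb{C}(C)(\lambda)/\mathbb{C}(C))=\mu_e$, where $e=[\mathbb{C}(C)(\lambda):\mathbb{C}(C)]$, and $e$ can be a proper divisor of $d$. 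For example, $G=\{\pm I\}$, $C=\mathbb{P}^1$, $P=X_1^2$ gives $\lambda=\pm 1$ and $K(\mathbf{y})=K$. So $\mathrm{Gal}(K(\mathbf{y})/E)=G$ cannot be proved in general, and your final appeal to Theorem~\ref{compointthm} does not show that the Galois group is $G$.

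What does hold, since your orbit product $P$ is an honest invariant, is the following. Take $\sigma\in\mathrm{Gal}(K(\mathbf{y})/K)$ with $\sigma|_{\mathbb{C}(C)}$ induced by $g\in G$. Then $\sigma(\lambda)^d=\sigma(\lambda^d)=\lambda^d\,((\mathbf{x}g)_1/x_1)^d$, so $\sigma(\lambda)=c\,\lambda\,(\mathbf{x}g)_1/x_1$ with $c\in\mu_d$. Hence $K(\mathbf{y})/K$ is Galois, and every element acts on $\mathbf{y}$ by a matrix $cg$. Your determinant argument works for any such matrix, so the coefficients of $L$ are fixed by all of $\mathrm{Gal}(K(\mathbf{y})/K)$ and already lie in $K$. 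You may therefore take $E=K$ and $\pi=\mathrm{id}$; the construction of $E$ and the check that it is abelian are unnecessary.

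Your approach differs from the paper's. The cited result, as sketched after the statement and in the algorithm, prescribes values $F_i(\mathbf{y})=f_i$ for a full generating set of invariants, subject to $f_i^{l_i}/f_j^{l_j}=\nu^*(F_i^{l_i}/F_j^{l_j})$. The radicals needed to choose such $f_i$ are what produce the abelian extension $E$. The equation is then obtained by differentiating the invariant relations through the Jacobian, which is explicit and computable with Gr\"obner bases. You instead impose the single relation $P(\mathbf{y})=1$, put all root extraction into one Kummer generator over $\mathbb{C}(C)$ rather than over $K$, and get the coefficients from Galois invariance of Wronskian minors.

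Repaired as above, your argument is an explicit form of the Hilbert 90 step in Section 2. The function $f_0=\lambda^d=x_1^d/P(\mathbf{x})$ is a coboundary for $\sigma\mapsto((\mathbf{x}g)_1/x_1)^d$. This cocycle is well defined because scalars in $G$ are $d$-th roots of unity, and it is the paper's cocycle with exponent $d$ in place of $m$. So your argument yields the existence part of Theorem~\ref{theo0} over $K$. However, its differential Galois group is only some central extension of $PG$ inside $\mu_d\cdot G$, not the minimal $PG^{SL}$; the paper's choice of exponent $m$ is what pins the Galois group down.
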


  The theorem provides a conceptual mechanism to construct differential equations with prescribed projective monodromy from invariant curves. The idea of the proof is that the invariant relations
  \[
  F_i(\mathbf{y})=f_i ,\qquad i=1,\ldots,r,
  \]
  may be repeatedly differentiated. This produces rational expressions in $y_1,\ldots,y_n$ for $\mathbf{y}$ and its first $n$ derivatives. The linear dependence relation between $\mathbf{y}, \mathbf{y}', \ldots, \mathbf{y}^{(n)}$ is invariant under the action of $G$; therefore, its coefficients lie in the base field $E$. In particular, once the quotient data and the invariant functions are known, the coefficients of the differential equation can be computed explicitly. This leads to the following algorithmic formulation introduced in \cite{SANABRIA2024}.

\subsection*{Algorithm}

  Assume that $K\subseteq\overline{\mathbb{Q}(z)}$ is a computable differential field, and that
  \[
  F_1,\ldots,F_r \in \mathbb{C}[X_1,\ldots,X_n]^G
  \]
  are homogeneous generators defined over $\mathbb{K}=K\cap\overline{\mathbb{Q}}$, indexed so that
  \[
  \det\!\left(\frac{\partial(F_1,\ldots,F_n)}{\partial(X_1,\ldots,X_n)}\right)\neq 0 .
  \]
  We take $\delta=d/dz$.

  \medskip
  \noindent\textbf{Input:} $f_1,\ldots,f_r\in K$ such that  
  \begin{itemize}
  \item[(i)] $I=\langle F_1-f_1,\ldots,F_r-f_r\rangle$ is a maximal ideal of 
  $K[X_1,\ldots,X_n]$;
  \item[(ii)] the associated projective curve $C\subset\mathbb{P}^{n-1}(\mathbb{C})$ is not contained in a projective hyperplane.
  \end{itemize}

  \noindent\textbf{Output:} coefficients $(a_{n-1},\ldots,a_0)\in K^n$ such that
  \[
  \delta^n(y)+a_{n-1}\delta^{n-1}(y)+\cdots+a_0 y=0
  \]
  has a fundamental system of solutions whose Schwarz map parametrizes $C$.

  \begin{enumerate}
  \item Fix a monomial ordering in $X_1,\ldots,X_n$ and compute a Gröbner basis 
  $B$ of $\langle F_1-f_1,\ldots,F_r-f_r\rangle$.

  \item For $i=1,\ldots,n$, set $Y_{0,i}=X_i$ and denote 
  $\mathbf{Y}_0=(Y_{0,1},\ldots,Y_{0,n})^{\intercal}$.

  \item Let $\mathbf{J}^{-1}$ be the remainder modulo $B$ of an inverse (modulo $I$) of the Jacobian matrix
  \[
  \frac{\partial(F_1,\ldots,F_n)}{\partial(X_1,\ldots,X_n)}.
  \]

  \item Define $\mathbf{Y}_1$ as the remainder modulo $B$ of 
  $\mathbf{J}^{-1}\mathbf{f}$, where $\mathbf{f}=(f_1,\ldots,f_n)^{\intercal}$.

  \item Extend $\delta$ to $K(X_1,\ldots,X_n)$ by setting 
  $\delta X_i = Y_{1,i}$.

  \item For $j=2,\ldots,n$, recursively define $\mathbf{Y}_j$ by taking
  $\mathbf{Y}_j$ to be the remainder modulo $B$ of $\delta \mathbf{Y}_{j-1}$.

  \item Let $\mathbf{Y}^{-1}$ be the remainder modulo $B$ of an inverse (modulo $I$) of the matrix
  \[
  [\mathbf{Y}_{n-1}\,|\,\cdots\,|\,\mathbf{Y}_1\,|\,\mathbf{Y}_0].
  \]

  \item Return the remainder modulo $B$ of
  \[
  -\mathbf{Y}^{-1}\mathbf{Y}_n,
  \]
  which yields the coefficient vector $(a_{n-1},\ldots,a_0)$.
  \end{enumerate}

  By Compoint's theorem (Theorem \ref{compointthm}), every irreducible linear differential equation with coefficients in a computable field and finite differential Galois group arises from this procedure.

  \medskip

  In practical computations, one typically works inside a Computer Algebra System over the base field $\mathbb{Q}(z)$. When the required coefficient field $K$ is a computable extension contained in $\overline{\mathbb{Q}(z)}$, the algorithm is implemented by adjoining generators of $K$ to $\mathbb{Q}(z)$ together with their minimal polynomials. Concretely, one enlarges the polynomial ring $\mathbb{Q}(z)[X_1,\ldots,X_n]$ by introducing new variables corresponding to the algebraic generators of $K$, and simultaneously enlarges the ideal $\langle F_1-f_1,\ldots,F_r-f_r\rangle$ by adjoining the minimal polynomials of these generators. Gröbner bases are then computed in this extended ring. This procedure allows the entire computation to be carried out effectively over $\mathbb{Q}(z)$ while representing coefficients in the desired algebraic extension.

  The previous construction produces linear ordinary differential equations whose Schwarz maps parametrize invariant curves and whose projective monodromy is controlled by the action of a finite group, in the sense that the projective monodromy is isomorphic to a subgroup of the image of the finite group in the projective linear group. A difference between the projective monodromy group of the LODE produced by the algorithm and the image of the finite group in the projective linear group may arise when the functions $f_i$ in the algorithm do not belong to the field $K$ but to $E$, using the notation of the theorem and the algorithm. Thus, in order to apply this framework to the inverse problem considered by Klein and obtain exactly the prescribed group $PSL_2(\mathbb{Z})/\tilde{\Gamma}(N)$ or $PSL_2(\mathbb{Z})/\tilde{H}(N)$ as projective monodromy, it will be necessary to understand the relationship between the action of the finite group, the projective monodromy group, and the full differential Galois group of the resulting equation.

\section{Central extensions and differential Galois groups}

  We state and prove a generalization of Theorem \ref{theoabelianext} which eliminates the need to pass to an abelian extension of the base field. Furthermore, the differential Galois group of the resulting equation will be a unimodular central extension of a finite projective linear group. The idea behind the proof relies on an argument of van der Put (cf. Lemma 3.3 in \cite{VANDERPUT2020}).

  We first introduce some notation. Let $G\subset GL_n(\mathbb{C})$ be a finite algebraic subgroup and let $PG$ be the image of $G$ in $PSL_n(\mathbb{C})$. We denote by $PG^{SL}$ the smallest group in $SL_n(\mathbb{C})$ whose image in $PSL_n(\mathbb{C})$ is $PG$. The group $PG^{SL}$ is a central extension of $PG$ by a finite cyclic group. Let $m$ denote the order of the cover $PG^{SL}\rightarrow PG$. 

  As in Theorem \ref{theoabelianext}, let $C\subseteq\mathbb{P}^{n-1}(\mathbb{C})$ be an algebraic, $G$-invariant, irreducible curve not contained in any projective hyperplane, and let $\nu:C_0\rightarrow C/G$ be a normalization. Set $K=\mathbb{C}(C_0)$. We will assume from now on that $PG$ acts faithfully on $C/G$, and therefore that $PG$ is isomorphic to the Galois group of the extension $K\le\mathbb{C}(C)$.
  
  Let $\mathcal{O}(1)$ be the line bundle of degree one on $\mathbb{P}^{n-1}(\mathbb{C})$ and let $\mathcal{L}=\imath^*\mathcal{O}(1)$, where $\imath:C\rightarrow \mathbb{P}^{n-1}(\mathbb{C})$ is the inclusion. The space of global sections of $\mathcal{L}$ is isomorphic to $\mathbb{C}^n$ because the image of $C$ is not contained in a projective hyperplane. Therefore, we obtain a right action of $PG^{SL}$ on the space of global sections of $\mathcal{L}$ that is isomorphic to the action on $\mathbb{C}[X_1,\ldots,X_n]$. We denote by $\mathbf{y}=(y_1,\ldots,y_n)$ a basis of global sections of $\mathcal{L}$ that transforms according to this representation.

  Let $V=\langle y_1,\ldots,y_n\rangle_\mathbb{C}$ be the vector space of global sections of $\mathcal{L}$. We want to construct a cyclic cover $\tilde{C}\rightarrow C$ such that the vector space $\tilde{V}=f V/y_1$ lies in $\mathbb{C}(\tilde{C})$ and is $PG^{SL}$-equivariant to $V$ for some $f\in\mathbb{C}(\tilde{C})$. Note that the analytic continuation of the map 
  \[
  z \mapsto \left[f(z): f(z)\frac{y_2}{y_1}(z):\ldots:f(z)\frac{y_n}{y_1}(z)\right]
  \]
  also yields a Schwarz map that parametrizes $C$, provided that $\tilde{V}$ is the space of solutions of a linear ordinary differential equation (as will be established in the proof of Theorem \ref{theo0} below).

  The existence of a cyclic cover $\tilde{C}\rightarrow C$ satisfying the desired properties is a consequence of Hilbert's Theorem 90. Indeed, $\mathbb{C}(C)$ is a Galois extension of $\mathbb{C}(C/G)\simeq K$ with Galois group $PG$, and if $\tilde{\sigma}$ denotes any lifting of $\sigma\in PG$ to $PG^{SL}$, then the map $\sigma\mapsto \left(\tilde{\sigma}(y_1)/y_1\right)^m$ is a 1-cocycle of $PG$ with values in $\mathbb{C}(C)^\times$, since
  \[
  \left(\dfrac{\tilde{\sigma}(y_1)}{y_1}\right)^m\cdot\sigma\left(\left(\dfrac{\tilde{\tau}(y_1)}{y_1}\right)^m\right) = \left(\dfrac{\tilde{\sigma}(y_1)}{y_1}\cdot\dfrac{\tilde{\sigma}\tilde{\tau}(y_1)}{\tilde{\sigma}(y_1)}\right)^m = \left(\dfrac{\widetilde{\sigma\tau}(y_1)}{y_1}\right)^m.
  \]
  Therefore, Hilbert's Theorem 90 guarantees the existence of $f_0\in\mathbb{C}(C)^\times$ such that
  \[
  \left(\dfrac{\tilde{\sigma}(y_1)}{y_1}\right)^m=\dfrac{\sigma(f_0)}{f_0}
  \]
  for every $\sigma\in PG$. The desired cyclic cover $\tilde{C}\rightarrow C$ corresponds to the extension $\mathbb{C}(\tilde{C})=\mathbb{C}(C)(f)$ where $f^m=f_0$. To establish this, first, we must verify that $\mathbb{C}(C)(f)$ is a Galois extension over $\mathbb{C}(C)$ of degree $m$; and second, that the vector space $\tilde{V}=f V/y_1$ is $PG^{SL}$-equivariant to $V$.

  To establish the first claim, it suffices to show that $X^m-f_0$ is irreducible over $\mathbb{C}(C)$. If $X^m-f_0$ were reducible, there would exist a proper divisor $d$ of $m$ such that $X^d-f_0$ has a root $g_0$ in $\mathbb{C}(C)$. In that case, we would have $\left(\tilde{\sigma}(y_1)/y_1\right)^m=\left(\sigma(g_0)/g_0\right)^d$ for any lifting $\tilde{\sigma}\in PG^{SL}$ of $\sigma\in PG$. Consider the subgroup $H$ of $PG^{SL}$ composed of the $m/d$ liftings $\tilde{\sigma}$ of each $\sigma\in PG$ such that $\left(\tilde{\sigma}(y_1)/y_1\right)^{m/d}=\sigma(g_0)/g_0$. The group $H$ defines a subgroup of $SL_n(\mathbb{C})$ whose image in $PSL_n(\mathbb{C})$ is also $PG$, strictly contained in $PG^{SL}$, which contradicts the minimality of the latter.

  Regarding the second claim, for every lifting $\tilde{\sigma}\in PG^{SL}$ of $\sigma\in PG$, from the equality $\left(\tilde{\sigma}(y_1)/y_1\right)^m=\sigma(f^m)/f^m$ we define
  \[
  \tilde{\sigma}(f)=f\dfrac{\tilde{\sigma}(y_1)}{y_1},
  \]
  so that the map $V\rightarrow \tilde{V}$ given by $v\mapsto fv/y_1$ is an isomorphism of representations.
  
  In particular, the extension $\mathbb{C}(C)(f)$ is Galois over $K$ with Galois group isomorphic to $PG^{SL}$, and therefore $\text{Gal}(\mathbb{C}(C)(f)/K)$ is an extension of $PG$ by the cyclic group $\text{Gal}(\mathbb{C}(C)(f)/\mathbb{C}(C))$ of order $m$.

  We synthesize the previous construction in the following theorem.

  \begin{thm}\label{theo0}
    Let $G$ be a finite algebraic subgroup of $GL_n(\mathbb{C})$ and $C\subseteq\mathbb{P}^{n-1}(\mathbb{C})$ be an algebraic, $G$-invariant, irreducible curve not contained in any projective hyperplane. Let $\nu:C_0\rightarrow C/G$ be a normalization and set $K=\mathbb{C}(C_0)$. If $PG$, the image of $G$ in $PSL_n(\mathbb{C})$, acts faithfully on $C$, then there exists a linear differential equation of order $n$ with coefficients in $K$ whose Schwarz map parametrizes $C$, and whose quotient Schwarz map is $\nu$. Moreover, the equation can be taken so that its differential Galois group is $PG^{SL}$, the minimal central extension of $PG$ in $SL_n(\mathbb{C})$.
  \end{thm}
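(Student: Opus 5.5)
The plan is to use the cyclic cover $\tilde C\to C$ built just above. Put $\tilde V = fV/y_1=\langle \tilde y_1,\ldots,\tilde y_n\rangle_\mathbb{C}\subset \mathbb{C}(\tilde C)$, with $\tilde y_j = f y_j/y_1$. These functions will be the solutions of the equation. We have $\tilde y_1=f$, and the ratios $\tilde y_j/\tilde y_1 = y_j/y_1$ are the affine coordinates of the inclusion $C\subset\mathbb{P}^{n-1}(\mathbb{C})$. Hence the map $[\tilde y_1:\cdots:\tilde y_n]$ parametrizes $C$.

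Fix a nontrivial derivation $\delta$ of $K$. It extends uniquely to the algebraic extension $\mathbb{C}(\tilde C)$. The tuple $\tilde{\mathbf{y}}$ spans the $n$-dimensional space $\tilde V$, so its Wronskian $W=\det(\delta^{i}\tilde y_j)$ should be nonzero. I will argue by contradiction. If $W=0$, the $\tilde y_j$ would be linearly dependent over the constants of $\mathbb{C}(\tilde C)$, which is $\mathbb{C}$ because $\tilde C$ is an irreducible curve. By the isomorphism $V\cong\tilde V$, this would give a linear relation among the $y_j$, i.e. $C$ would lie in a hyperplane, which is excluded. Now define
\[
L(y)=\frac{\det\big(y,\delta y,\ldots,\delta^n y;\ \tilde y_j,\ldots,\delta^n\tilde y_j\big)}{W},
\]
the usual monic operator with solution space $\tilde V$. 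Its coefficients $a_i$ are ratios of Wronskian-type determinants in $\mathbb{C}(\tilde C)$.

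The key step is to show that $a_i\in K$. By the discussion preceding the theorem, $\mathbb{C}(\tilde C)=\mathbb{C}(C)(f)$ is Galois over $K$ with group $PG^{SL}$, and its action on $\tilde V$ is the given linear representation, with every element of determinant $1$. Field automorphisms over $K$ commute with the unique extension of $\delta$. Therefore each $\tilde\sigma$ sends $\tilde V$ to itself and multiplies every determinant above by $\det\tilde\sigma=1$, or at worst by the same scalar in numerator and denominator. So the $a_i$ are fixed by the whole Galois group and lie in $K$. The singular locus is finite, and away from it the solutions are the analytic germs of $\tilde y_j$. The Schwarz map is therefore the analytic continuation of $[\tilde{\mathbf y}]$, which parametrizes $C$. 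Composing with $C\to C/G$ and identifying $\mathbb{C}(C/G)$ with $K=\mathbb{C}(C_0)$ via $\nu$, the quotient Schwarz map is $\nu$.

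For the Galois group, note that $\mathbb{C}(\tilde C)$ contains all solutions and is generated over $K$ by them. Indeed, it contains $f=\tilde y_1$ and $y_j/y_1=\tilde y_j/\tilde y_1$, and $\mathbb{C}(C)$ is generated by these ratios. The constants are $\mathbb{C}$, so $\mathbb{C}(\tilde C)$ is a Picard--Vessiot extension of $K$ for $L$. Its differential Galois group is $\mathrm{Gal}(\mathbb{C}(\tilde C)/K)\cong PG^{SL}$, acting on $\tilde V$ through the faithful representation in $SL_n(\mathbb{C})$.

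I expect the main obstacle to be making precise that the Galois group of $\mathbb{C}(C)(f)/K$ really is $PG^{SL}$, acting on $\tilde V$ exactly through the matrix representation. This requires that each $\tilde\sigma$, defined on $f$ by $\tilde\sigma(f)=f\tilde\sigma(y_1)/y_1$, extends to a field automorphism. I would try the extensions of $\sigma\in PG$ to the Kummer extension: there are exactly $m$ of them, and they differ by $m$-th roots of unity on $f$. I would match these with the $m$ central lifts in $PG^{SL}$ using the irreducibility of $X^m-f_0$ proved above. The degree count $[\mathbb{C}(C)(f):K]=m|PG|=|PG^{SL}|$ then gives the isomorphism.
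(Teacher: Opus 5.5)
Your proposal is correct and follows the paper's approach. You take the Kummer cover $\mathbb{C}(\tilde C)=\mathbb{C}(C)(f)$ obtained from Hilbert's Theorem 90, use $\tilde V=fV/y_1$ as the solution space, and read off coefficients in $K$ and differential Galois group $PG^{SL}$ from the linear Galois action on $\tilde V$. Beyond that, you make explicit what the paper's brief proof leaves implicit: the nonvanishing Wronskian, the Galois-invariance of the Wronskian quotients, and the count $[\mathbb{C}(C)(f):K]=m|PG|$ that matches the $m$ extensions of each $\sigma$ with its $m$ central lifts.
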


  \begin{proof}
    We retain the notation from the previous discussion. The field $\mathbb{C}(\tilde{C})=K[\tilde{V}]$ is a Galois extension of $K$ with Galois group $PG^{SL}$. Furthermore, $PG^{SL}$ acts linearly on $\tilde{V}$, meaning $\tilde{V}$ is the space of solutions of a linear ordinary differential equation with coefficients in $K$ and differential Galois group $PG^{SL}$. The analytic extension of the map $z\mapsto [f(z): f(z)\frac{y_2}{y_1}(z):\ldots:f(z)\frac{y_n}{y_1}(z)]$ is a Schwarz map that parametrizes $C$, and the corresponding quotient Schwarz map is $\nu$.
  \end{proof}

  We briefly recall the concept of projective equivalence. Two linear ordinary differential equations defined over the same differential field are \emph{projectively equivalent over $K$} if there exists a function $h$ such that $h'/h\in K$ and $hy$ is a solution of one equation whenever $y$ is a solution of the other. In this case, one can choose bases of solutions for both equations such that their associated Schwarz maps coincide. Note that if $f=h'/h$, then $h=\exp\left(\int\! f\right)$.

  \begin{cor}
    The linear ordinary differential equation obtained in Theorem \ref{theo0} is uniquely determined up to projective equivalence by the curve $C$ and the normalization $\nu:C_0\rightarrow C/G$.
  \end{cor}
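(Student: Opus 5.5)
Two equations $L_1(y)=0$ and $L_2(y)=0$ produced by Theorem \ref{theo0} from the same data $C$ and $\nu$ have the same Schwarz map up to a projective change of coordinates. The aim is to show that their solution spaces differ by multiplication by a function $h$ with $h'/h\in K$.

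First I would normalize the Schwarz maps. Both Schwarz maps parametrize $C$ and both quotient Schwarz maps equal $\nu$. Hence, after analytic continuation near a common nonsingular point $p\in C_0$, the two local maps $C_0\supset U\to C$ are lifts of the same map $U\to C/G$. Two such lifts differ by an element of $PG$, because $PG$ acts faithfully on $C$ and the fibres of $C\to C/G$ are $PG$-orbits. Composing one basis of solutions with a matrix representing this element of $PG$, I may therefore assume the two Schwarz maps coincide on $U$. Concretely, there are bases $\mathbf{y}=(y_1,\ldots,y_n)$ of solutions of $L_1$ and $\mathbf{w}=(w_1,\ldots,w_n)$ of solutions of $L_2$ with $[\mathbf{y}]=[\mathbf{w}]$. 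This gives $\mathbf{w}=h\,\mathbf{y}$ for the single function $h=w_1/y_1$, meromorphic on $U$ and continuable along every path avoiding the singular points.

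Next I would show that $h$ is algebraic over $K$. By Theorem \ref{theo0} and Picard--Vessiot theory, both equations have finite differential Galois groups, so all $y_i$ and $w_i$ are algebraic over $K$. Hence $h=w_1/y_1$ lies in the compositum $F$ of the two Picard--Vessiot extensions, which is a finite Galois extension of $K$.

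The remaining step, which I expect to be the main obstacle, is to show $h'/h\in K$. My approach is to prove that $h'/h$ is fixed by $\mathrm{Gal}(F/K)$. Take an automorphism $\sigma$, equivalently a monodromy transformation along a loop. It acts on $\mathbf{y}$ by a matrix $A_\sigma$ and on $\mathbf{w}$ by a matrix $B_\sigma$. Since $\mathbf{w}=h\mathbf{y}$ and the Schwarz maps coincide, $A_\sigma$ and $B_\sigma$ induce the same projective transformation. Both projective transformations are determined by the action on $C$ lying over $\nu$, and this action is faithful. Therefore $B_\sigma=c_\sigma A_\sigma$ for some scalar $c_\sigma\in\mathbb{C}^\times$. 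Applying $\sigma$ to $w_1=hy_1$ gives $\sigma(h)=c_\sigma h$. Logarithmic differentiation then gives $\sigma(h'/h)=h'/h$, because $\sigma$ commutes with $\delta$ and $c_\sigma$ is constant. By the Galois correspondence, $h'/h\in K$. Writing $f=h'/h$ gives $h=\exp\left(\int f\right)$, so the two equations are projectively equivalent over $K$.

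The delicate point in this last step is the identification of $A_\sigma$ and $B_\sigma$ projectively. It uses that both quotient Schwarz maps equal $\nu$, together with faithfulness of $PG$ on $C$. It also needs the standing assumption that the Picard--Vessiot extensions are generated over $K$ as in the preceding construction: there, $\mathbb{C}(\tilde C)=K[\tilde V]$ is Galois with group $PG^{SL}$. If the two equations have different Picard--Vessiot fields, I would carry out this comparison inside the common compositum $F$.
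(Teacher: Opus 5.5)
Your proof is correct, but it takes a different route from the paper's. The paper works only with the invariant data of the algorithm, in the spirit of Compoint's theorem. With homogeneous generators $F_1,\ldots,F_r$ of $\mathbb{C}[X_1,\ldots,X_n]^{PG^{SL}}$ of degrees $d_i$, an equation is encoded by the values $f_i=F_i(\mathbf{y})\in K$. Fixing $\nu$ fixes the ratios $f_i^{l_i}/f_j^{l_j}$ with $l_id_i=l_jd_j$. So any other admissible data $g_i\in K$ arise as $g_i=h^{d_i}f_i$, where $h$ has $h'/h=h_i'/(d_ih_i)\in K$ with $h_i=g_i/f_i$. Then $h\mathbf{y}$ realizes the ideal $\langle F_1-g_1,\ldots,F_r-g_r\rangle$.

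You work with the solutions themselves. You use that two local lifts of $\nu$ to $C$ differ by a single element of $PG$, since the fibres of $C\to C/G$ are $PG$-orbits and the identity theorem applies on the connected set $U$. Composing with that element gives $[\mathbf{y}]=[\mathbf{w}]$, which produces $h=w_1/y_1$ directly. The paper's route ties uniqueness to the input of the algorithm and shows exactly how the data $f_i$ may vary, with $h$ a radical over $K$. Yours needs neither Compoint's theorem nor a choice of invariants. It also avoids extracting one $h$ with $h^{d_i}=g_i/f_i$ simultaneously for all $i$ with $f_i\neq 0$; note that $f_6=0$ for $X(9)$.

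The step you flag as the main obstacle is actually the easy one, and your second step can be dropped. Suppose $\mathbf{w}=h\mathbf{y}$, where $\mathbf{y}$ and $\mathbf{w}$ are fundamental systems of monic equations over $K$ with subleading coefficients $a_{n-1}$ and $b_{n-1}$. Then $W(\mathbf{w})=h^nW(\mathbf{y})$, and Abel's formula $\delta W=-a_{n-1}W$ gives $n\,h'/h=a_{n-1}-b_{n-1}\in K$. So the algebraicity of $h$, the compositum $F$, and the identification $B_\sigma=c_\sigma A_\sigma$ are unnecessary, though correct. The real content of your argument is the first step, which is where the hypothesis that both quotient Schwarz maps equal $\nu$ is used.
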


  \begin{proof}
    We keep the notation from above. Let $F_1,\ldots,F_r \in \mathbb{C}[X_1,\ldots,X_n]^{PG^{SL}}$ be a set of homogeneous generators and $d_i=\deg(F_i)$ for $i=1,\ldots,r$. By Galois correspondence and the fact that $PG^{SL}$ is the Galois group of $\mathbb{C}(\tilde{C})$ over $K$, we have that $f_i(z)=F_i\left(f(z), f(z)\frac{y_2}{y_1}(z),\ldots,f(z)\frac{y_n}{y_1}(z)\right)$ belongs to $K$ for each $i=1,\ldots,r$, and
    \[
    \nu^*\left(\dfrac{F_i^{l_i}}{F_j^{l_j}}\right) = \dfrac{f_i^{l_i}}{f_j^{l_j}}
    \]
    for any $i,j\in\{1,\ldots,r\}$ and $l_i,l_j\in\mathbb{Z}_{>0}$ such that $l_id_i=l_jd_j$, provided $C/G\not\subseteq\{F_j=0\}$. To consider another normalization, let $g_1,\ldots,g_r\in K$ be such that $g_i^{l_i}/g_j^{l_j}=f_i^{l_i}/f_j^{l_j}$ for any $i,j\in\{1,\ldots,r\}$ and $l_i,l_j\in\mathbb{Z}_{>0}$ such that $l_id_i=l_jd_j$. If we define $h$ such that $h^{d_i} = g_i/f_i=h_i$, then we obtain $h^{l_jd_j}=h^{l_id_i}=(g_i/f_i)^{l_i}=(g_j/f_j)^{l_j}$ and $h'/h=\frac{h'_i}{d_ih_i}\in K$. Furthermore,
    \[
    F_i\left(h(z)f(z), h(z)f(z)\frac{y_2}{y_1}(z),\ldots,h(z)f(z)\frac{y_n}{y_1}(z)\right)=h^{d_i}(z)f_i(z)=g_i(z),
    \]
    and therefore the linear ordinary differential equation determined by the maximal ideal $\langle F_1-g_1,\ldots,F_r-g_r\rangle$ is projectively equivalent to the one determined by $\langle F_1-f_1,\ldots,F_r-f_r\rangle$.
  \end{proof}

  \begin{cor}
    Under the assumptions and notation of Theorem \ref{theo0}, let $L_{PG,C}(y)=0$ be the linear ordinary differential equation obtained. Let $H$ be a normal subgroup of $PG$, and let $L_{H,C}(y)=0$ be the linear ordinary differential equation obtained by applying Theorem \ref{theo0} to the action of $H$ on the curve $C$ and a normalization $\mu: C_1 \rightarrow C/H$. Then, $L_{H,C}(y)=0$ is projectively equivalent to the pullback of $L_{PG,C}(y)=0$ by a rational map $p: C_1 \rightarrow C_0$.
  \end{cor}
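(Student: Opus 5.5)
The plan is to compare the two equations through their Schwarz maps and Picard--Vessiot extensions, and then invoke the uniqueness corollary above.

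\emph{Constructing $p$.} The quotient map $C\to C/PG$ factors as $C\to C/H\to C/PG$, because $H\subseteq PG$. Since $H$ is normal, the second map is the quotient by the finite group $PG/H$ acting on $C/H$. Passing to normalizations, the map $C/H\to C/PG$ lifts to a finite morphism $p:C_1\to C_0$ with $\nu\circ p$ equal to the composite $C_1\to C/H\to C/PG$. On function fields this gives the tower $K=\mathbb{C}(C_0)\subseteq K_1=\mathbb{C}(C_1)\subseteq \mathbb{C}(C)$. Here $\mathbb{C}(C)/K_1$ is Galois with group $H$ (faithfulness is inherited from $PG$), and $K_1/K$ is Galois with group $PG/H$. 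We fix a derivation $\delta$ on $K$ and extend it uniquely to $K_1$; this is exactly what pulling back an equation by $p$ means.

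\emph{Comparing the equations.} Let $\tilde V=fV/y_1\subseteq\mathbb{C}(\tilde C)$ be the solution space of $L_{PG,C}$ from the proof of Theorem \ref{theo0}. Its pullback $p^*L_{PG,C}$ has the same solution space $\tilde V$, now over $K_1$. Its Schwarz map is therefore the same $[\mathbf{y}]$, parametrizing $C$. Its quotient Schwarz map by $H$ is $\mu$, because $\mathbb{C}(C)^H=K_1$. The differential Galois group of $p^*L_{PG,C}$ over $K_1$ is $\mathrm{Gal}(K_1(\tilde V)/K_1)$, the subgroup of $PG^{SL}$ lying over $H$. Its image in $PSL_n(\mathbb{C})$ is $H$, and it is a (possibly non-minimal) central extension of $H$.

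Thus $p^*L_{PG,C}$ is an order-$n$ equation over $K_1$ whose Schwarz map parametrizes $C$ and whose quotient Schwarz map is $\mu$. By Theorem \ref{compointthm}, it corresponds to a maximal ideal $\langle F_i-g_i\rangle$ with $F_i$ generating the invariants of its Galois group and $g_i\in K_1$. The equation $L_{H,C}$ comes from the analogous ideal $\langle F'_j-f'_j\rangle$ built on $H^{SL}$.

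\emph{Main obstacle: the mismatch of central extensions.} The Galois group of $p^*L_{PG,C}$ need not equal $H^{SL}$. I plan to handle this with the projective-equivalence argument of the previous corollary. Choose a function $f_H$ realizing $L_{H,C}$ through the Hilbert 90 construction for $H$. Then both $fy_2/y_1$-type bases are $f\cdot(\mathbf y/y_1)$ and $f_H\cdot(\mathbf y/y_1)$. Their ratio $h=f/f_H$ lies in an extension on which every element of the Galois group of the compositum acts by a scalar, namely the central character. It follows that some power $h^{e}$ is invariant, so $h^e\in K_1$, and hence $h'/h=\frac{1}{e}(h^e)'/h^e\in K_1$. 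Multiplying solutions of $L_{H,C}$ by $h$ gives solutions of $p^*L_{PG,C}$, which is precisely projective equivalence over $K_1$.

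The step needing care is the claim that $g(h)/h$ is a root of unity independent of the solution coordinate. This follows because both $f\mathbf y/y_1$ and $f_H\mathbf y/y_1$ transform by matrices in $SL_n(\mathbb{C})$ lying over the same element of $H$, and two such lifts differ by a scalar $n$-th root of unity. So one can take $e$ dividing $n\cdot|H^{SL}|$.
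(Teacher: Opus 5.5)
Your argument is correct, but it takes a different route from the paper's at the decisive step.

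The paper constructs $p$ exactly as you do. It then uses $PG^{SL}$-invariant rational functions to show that $\mu$ lifts $\nu\circ p$, which produces some $g\in G$ relating the two Schwarz maps. It notes that the quotient Schwarz map of the pulled-back equation is $\mu$, and finishes by citing the first corollary (uniqueness up to projective equivalence given $C$ and the normalization).

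You instead write down the gauge factor explicitly. With $h=f/f_H$, every automorphism of the compositum over $K_1$ restricts to some $\sigma\in H$. It acts on $f$ and on $f_H$ through two $SL_n(\mathbb{C})$-lifts of $\sigma$, so it multiplies $h$ by an element of $\mu_n$. Hence $h^n\in K_1$, $h'/h\in K_1$, and $h\cdot f_HV/y_1=fV/y_1$.

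What your route buys is that it confronts a point the paper passes over. The differential Galois group of $p^*L_{PG,C}$ over $K_1$ is the full preimage of $H$ in $PG^{SL}$, which can be strictly larger than $H^{SL}$; for $H$ trivial it is the scalar subgroup of order $m$. So the pullback is not itself an output of Theorem \ref{theo0} for $H$. Consequently the first corollary does not apply verbatim, since its proof needs the invariants of the Galois group to take values in the base field. Using the same basis $\mathbf{y}$ in both constructions also makes both Schwarz maps the tautological parametrization of $C$, so no auxiliary $g$ is needed.

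The paper's route is shorter and reuses the uniqueness statement conceptually, in the same way the subsequent proposition does.

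Two minor points: your appeal to Theorem \ref{compointthm} and the ideal $\langle F_i-g_i\rangle$ is never used and can be dropped, and you can simply take $e=n$.
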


  \begin{proof}
    Let $\pi_H: C/H \rightarrow C/PG=C/G$ be the projection induced by the inclusion $H \subset PG$. Since $\nu: C_0 \rightarrow C/G$ and $\mu: C_1 \rightarrow C/H$ are normalizations, there exists a rational map $p: C_1 \rightarrow C_0$ such that $\nu\circ p = \pi_H\circ\mu$.
    \[
    \xymatrix{
      C_1\ar[rr]^{\mu}\ar[d]_{p} & & C/H\ar[d]^{\pi_H}\\
      C_0\ar[rr]_{\nu} & & C/G
    }
    \]
    Thus, if $(y_{PG,1},\ldots,y_{PG,n})$ and $(y_{H,1},\ldots,y_{H,n})$ are bases of solutions of $L_{PG,C}(y)=0$ and $L_{H,C}(y)=0$ respectively, such that their associated Schwarz map parametrizes $C$, and if $F_1(X_1,\ldots,X_n)$ and $F_2(X_1,\ldots,X_n)$ are homogeneous elements in $\mathbb{C}[X_1,\ldots,X_n]$ invariant under $PG^{SL}$ with $F_2$ not vanishing identically on $C$, then
    \begin{align*}
    \dfrac{F_1}{F_2}\left(y_{G,1}\circ p,\ldots,y_{G,n}\circ p\right) & = (\nu \circ p)^*\left(\dfrac{F_1}{F_2}\right) = (\pi_H\circ\mu)^*\left(\dfrac{F_1}{F_2}\right) \\
     & = \mu^*\circ\pi_H^*\left(\dfrac{F_1}{F_2}\right) = \mu^*\left(\dfrac{F_1}{F_2}\right)\\
     & = \dfrac{F_1}{F_2}(y_{H,1},\ldots,y_{H,n}).
    \end{align*}
    Hence, the quotient Schwarz map $\mu$ is a lifting to $C/H$ of the composition $\nu\circ p$, and therefore there exists a $g\in G$ such that $[y_{PG,1}\circ p:\ldots:y_{PG,n}\circ p]g=[y_{H,1}:\ldots:y_{H,n}]$. The quotient Schwarz map of the pullback by $p$ of $L_{PG,C}(y)=0$ coincides with the normalization $\mu$. From the first corollary of Theorem \ref{theo0}, we conclude that $L_{H,C}(y)=0$ is projectively equivalent to the pullback of $L_{PG,C}(y)=0$ by $p$.
  \end{proof}

  If we apply the algorithm from the previous section to obtain the equation from Theorem \ref{theo0}, using the group $PG^{SL}$ in place of $G$, the Galois correspondence implies that the functions $f_i$ belong to $K$. Therefore, the resulting equation will be defined over $K$ instead of over an abelian extension of $K$ as in Theorem \ref{theoabelianext}. The equation obtained in this way satisfies the following universal property, first identified by Klein for the equations associated with the tetrahedral modular curve $X(3)$, the octahedral modular curve $X(4)$, and the icosahedral modular curve $X(5)$ \cite{KLEIN187711,KLEIN187712}.

  \begin{prop}
    Let $G\subset GL_n(\mathbb{C})$ be a finite algebraic subgroup and let $C\subseteq\mathbb{P}^{n-1}(\mathbb{C})$ be an algebraic, $G$-invariant, irreducible curve not contained in any projective hyperplane. Let $\nu:C_0\rightarrow C/G$ be a normalization and set $K=\mathbb{C}(C_0)$. Assume that $PG$, the image of $G$ in $PSL_n(\mathbb{C})$, acts faithfully on $C$. Let $L_C(y)=0$ be the linear ordinary differential equation from Theorem \ref{theo0} with Galois group $PG^{SL}$.
    
    Let $L(y)=0$ be a linear ordinary differential equation of order $n$ defined over a field $K_L=\mathbb{C}(C_L)$, where $C_L$ is a normal algebraic curve. Assume that the Schwarz map of $L(y)=0$ parametrizes $C$ for some choice of basis of solutions. If the projective monodromy for this choice is a normal subgroup of $PG$, then there exists a rational map $p: C_L \rightarrow C_0$ such that $L(y)=0$ is projectively equivalent to the pullback of $L_C(y)=0$ by $p$; i.e., there exists a function $f\in K_L$ such that
    \[
    y=e^{\int\! f}\cdot(y_0 \circ p)
    \]
    is a solution of $L(y)=0$ whenever $y_0$ is a solution of $L_C(y_0)=0$.
  \end{prop}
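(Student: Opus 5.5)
Let $\mathbf{y}_L=(y_{L,1},\ldots,y_{L,n})$ be the basis of solutions of $L(y)=0$ whose Schwarz map parametrizes $C$, and let $H\trianglelefteq PG$ be the corresponding projective monodromy group. The plan mirrors the proof of the second corollary of Theorem \ref{theo0}. First we construct $p$, then we compare the two equations through their invariants, and finally we invoke the uniqueness statement of the first corollary of Theorem \ref{theo0}.

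To construct $p$, we use that the Schwarz map $[\mathbf{y}_L]$ is multivalued on $C_L$ only up to the action of $H\subseteq PG$. Hence its composition with the quotient $C\to C/G$, namely the quotient Schwarz map $\psi_L$ of $L$ followed by $\pi_H:C/H\to C/G$, is single valued on $C_L$ minus the singular points. Concretely, for homogeneous $F_1,F_2\in\mathbb{C}[X_1,\ldots,X_n]^{PG^{SL}}$ of equal degree, with $F_2\not\equiv 0$ on $C$, the function $\frac{F_1}{F_2}(\mathbf{y}_L)$ is invariant under analytic continuation. Scalars cancel, and any lift to $GL_n$ of an element of $H$ acts on the ratio as an element of $PG^{SL}$ up to a scalar. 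Moreover this function is algebraic, since $C$ is algebraic and $C/G$ is a projective variety. Therefore $\frac{F_1}{F_2}(\mathbf{y}_L)\in K_L$, and we obtain a rational map $C_L\dashrightarrow C/G$. Because $C_L$ is normal and $\nu:C_0\to C/G$ is a normalization, this map lifts to a rational map $p:C_L\to C_0$ with $\nu\circ p$ equal to it. The map $p$ is nonconstant because the Schwarz map of $L$ is nonconstant, its image being $C$.

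Next we pull back $L_C(y)=0$ along $p$, using the basis $\mathbf{y}_0$ of Theorem \ref{theo0}. The Schwarz map of the pullback is $[\mathbf{y}_0\circ p]$, and its quotient Schwarz map is $\nu\circ p$, which agrees with that of $L$. Two multivalued parametrizations of $C$ with the same image in $C/G$ differ locally by an element of $PG$. So after replacing $\mathbf{y}_0$ by $\mathbf{y}_0 g$ for some $g\in G$ (still a basis of solutions), we get $[\mathbf{y}_0\circ p]=[\mathbf{y}_L]$ as germs at a nonsingular point. It follows that $\mathbf{y}_L=h\cdot(\mathbf{y}_0\circ p)$ for a single scalar function $h$, namely $h=y_{L,1}/(y_{0,1}\circ p)$.

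The main obstacle is showing that $f=h'/h$ lies in $K_L$, i.e.\ that $h$ is single valued up to constants. We would argue as in the proof of the first corollary. For each homogeneous generator $F_i$ of degree $d_i$, the functions $g_i=F_i(\mathbf{y}_L)$ and $F_i(\mathbf{y}_0\circ p)=f_i\circ p\in K_L$ satisfy $h^{d_i}=g_i/(f_i\circ p)$. Under monodromy of $L$, which acts on $\mathbf{y}_L$ through lifts of elements of $H$, $g_i$ is multiplied only by constants. Hence $g_i'/g_i\in K_L$ when the monodromy is finite. In general one uses that the logarithmic derivative of a function multiplied by constants under continuation is single valued and algebraic, by the regularity implicit in $L$ having coefficients in $K_L$. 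Thus $f=\frac{1}{d_i}\left(\frac{g_i'}{g_i}-\frac{(f_i\circ p)'}{f_i\circ p}\right)\in K_L$. This expression is independent of $i$ because all the $h^{d_i}$ come from the same $h$.

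Finally, both $L$ and $e^{\int f}\cdot p^*L_C$ are monic equations of order $n$ over $K_L$ with the common solution space spanned by $\mathbf{y}_L$. Therefore they coincide, which gives the stated projective equivalence.
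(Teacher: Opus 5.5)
Your proof is correct in outline and takes a more direct route than the paper. The paper argues in three stages. First, when the Galois group of $L$ is exactly $PG^{SL}$, $\nu\circ p$ is the quotient Schwarz map and the corollaries of Theorem~\ref{theo0} apply. Second, when $H=PG$, it normalizes $L$ so that its Galois group lies in $SL_n(\mathbb{C})$, hence equals $PG^{SL}\cdot C_l$. It then applies the first stage over the fixed field $\tilde K_L=E_L^{PG^{SL}}$ and descends $\tilde p$ and $f$ to $K_L$ using monomials in the invariants of total degree divisible by $l$. Third, for a general normal $H$, it factors through the equation $L_{C,H}$ attached to $H$. You instead get $p$ and the gauge factor $h$ at once, from ratios of $PG^{SL}$-invariants and one matching element $g\in G$. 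This avoids the auxiliary cover $\tilde C_L$ and the equation $L_{C,H}$. It also shows that normality of $H$ plays no role: your argument works for any subgroup $H\subseteq PG$.

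The one step to repair is your justification that the single-valued functions $\frac{F_1}{F_2}(\mathbf{y}_L)$ and $g_i'/g_i$ lie in $K_L$. Neither algebraicity of $C$ nor having coefficients in $K_L$ gives this. For $y''-y=0$ over $\mathbb{C}(z)$, the Schwarz map $[e^z:e^{-z}]$ has dense image in $\mathbb{P}^1$ and trivial projective monodromy, yet $e^{2z}\notin\mathbb{C}(z)$. You need one of two fixes. The first is an explicit Fuchsian hypothesis, under which single-valued with moderate growth implies meromorphic. The second is to read the hypothesis, as the paper implicitly does, as a statement about the differential Galois group. In that reading every Galois automorphism $\sigma$ acts on $\mathbf{y}_L$ by $c_\sigma g'_\sigma$ with $g'_\sigma\in PG^{SL}$. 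Let $W$ be the Wronskian of $\mathbf{y}_L$. Then $\frac{F_1}{F_2}(\mathbf{y}_L)$ and $g_i^n/W^{d_i}$ are Galois-invariant, so they lie in $K_L$. Since $W'/W=-a_{n-1}\in K_L$, it follows that $g_i'/g_i\in K_L$. This Wronskian step is exactly what the paper's normalization to $SL_n(\mathbb{C})$ achieves, and with it your proof is complete.
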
  

  \begin{proof}
    Let $H$ denote the projective monodromy group of $L(y)=0$ for the given choice of basis of solutions $(y_1,\ldots,y_n)$. We take this basis of solutions so that it parametrizes $C$. By assumption, $H$ is a normal subgroup of $PG$, therefore $PG$ acts on $C/H$, and the quotient map $C\rightarrow C/G$ factors through $C\rightarrow C/H$. As above, let $\pi_H: C/H \rightarrow C/G$ be the induced projection. If $\psi: C_L \dashrightarrow C/H$ is the quotient Schwarz map associated with $L(y)=0$, then $\pi_H\circ\psi: C_L \rightarrow C/G$ is a branched cover. Since $\nu: C_0 \rightarrow C/G$ is a normalization, there exists a rational map $p: C_L \rightarrow C_0$ such that $\nu\circ p = \pi_H\circ\psi$.
    \[
    \xymatrix{
      C_L\ar@{-->}[rr]^{\psi}\ar[d]_{p} & & C/H\ar[d]^{\pi_H}\\
      C_0\ar[rr]_{\nu} & & C/G
    }
    \]

    We denote by $\tilde{H}$ the representation of the differential Galois group of $L(y)=0$ in $GL_n(\mathbb{C})$ for the given choice of basis of solutions. Let $(y_{C,1},\ldots,y_{C,n})$ be a basis of solutions of $L_C(y)=0$ such that the associated Schwarz map parametrizes $C$.

    The proof proceeds in three parts. First, we show that the proposition holds if $\tilde{H}$ is $PG^{SL}$; then, we show that it holds if $H$ is $PG$; and finally, we establish it in full generality, where $H$ is any normal subgroup of $PG$.
    
    Assume first that $\tilde{H}=PG^{SL}$. In this case, $H=PG$ and therefore $\pi_H$ is the identity, making $\nu\circ p = \psi$ the quotient Schwarz map of $L(y)=0$. The second corollary of Theorem \ref{theo0} implies that $L(y)=0$ is projectively equivalent to the pullback by $p$ of $L_C(y)=0$, and therefore the proposition holds.

    We now assume that $H=PG$. It is a well-known fact that we can replace $L(y)=0$ with a projectively equivalent equation such that the representation of the differential Galois group lies in $SL_n(\mathbb{C})$. Thus, we may assume $\tilde{H}\subseteq SL_n(\mathbb{C})$. In that case, $\tilde{H}$ is the product of $PG^{SL}$ and a finite cyclic group $C_l$ of order $l$. Indeed, every $\gamma \in \tilde{H}$ is the product of an element in $PG^{SL}$ and a scalar matrix in $SL_n(\mathbb{C})$. Let $E_L$ be the Picard--Vessiot extension of $L(y)=0$ and $\tilde{K}_L$ the fixed field $E_L^{PG^{SL}}$, where $PG^{SL}$ is identified with a subgroup of $\tilde{H}$. Furthermore, let $\tilde{C}_L$ be a compact Riemann surface with field of meromorphic functions $\mathbb{C}(\tilde{C}_L)\simeq \tilde{K}_L$. In particular, the Galois group of $E_L$ over $\tilde{K}_L$ is $PG^{SL}$, and by the first part of the proof, there exists a rational map $\tilde{p}: \tilde{C}_L \rightarrow C_0$ such that $L(y)=0$ is projectively equivalent to the pullback by $\tilde{p}$ of $L_C(y)=0$. This means that if $q: \tilde{C}_L \rightarrow C_L$ is the projection induced by the inclusion $K_L\subseteq \tilde{K}_L$, then there exists a function $f\in\tilde{K}_L$ such that
  \[
    y_i\circ q=e^{\int\! f}\cdot\left(y_{C,i}\circ \tilde{p}\right), \quad \text{for } i=1,\ldots,n.
  \]
  \[
  \xymatrix{
       & & & & C\ar[dd]\\
      \tilde{C}_L\ar[rrd]^{\tilde{p}}\ar[d]_{q}\ar@{..>}[rrrru]^{[\mathbf{y}]\circ q} & & & &\\
      C_L\ar[rr]_{p}\ar@{..>}[rrrruu]^{[\mathbf{y}]} & & C_0\ar@{..>}[rruu]_{[\mathbf{y}_C]}\ar[rr]_{\nu} & & C/G
  }
  \]
  We will show that $\tilde{p}$ factors through $q$ and that $f$ is in $K_L$. For this, as before, let $F_1,\ldots,F_r \in \mathbb{C}[X_1,\ldots,X_n]^{PG^{SL}}$ be a set of homogeneous generators of degrees $d_1,\ldots,d_r$, and let $f_1,\ldots,f_r\in K$ be defined by $f_i=F_i\left(y_{C,1},\ldots,y_{C,n}\right)$ for each $i=1,\ldots,r$. Thus, we have $\tilde{g}_1,\ldots,\tilde{g}_r\in \tilde{K}_L$ given by
  \begin{align*}
    \tilde{g}_i &=F_i\left(y_1\circ q,\ldots,y_n\circ q\right)=F_i\left(y_1,\ldots,y_n\right)\circ q\\
      &=F_i\left(e^{\int\! f}\cdot(y_{C,1}\circ \tilde{p}),\ldots,e^{\int\! f}\cdot(y_{C,n}\circ \tilde{p})\right)\\
      &=e^{d_i\int\! f}\cdot F_i\left(y_{C,1}\circ \tilde{p},\ldots,y_{C,n}\circ \tilde{p}\right)\\
      &=e^{d_i\int\! f}\cdot \left(f_i\circ \tilde{p}\right).
  \end{align*}
  Now if $l \mid l_1d_1+\ldots+l_rd_r$, then $F_1^{l_1}\cdots F_r^{l_r}$ and $\tilde{g}_1^{l_1}\cdots \tilde{g}_r^{l_r}$ are invariant under the action of $C_l$, meaning the latter belongs to $K_L$. Therefore, if we also have $m_1d_1+\ldots+m_rd_r=l_1d_1+\ldots+l_rd_r$ and $\tilde{g}_1^{m_1}\cdots \tilde{g}_r^{m_r}\ne 0$, then
\begin{align*}
  \nu^*\left(\dfrac{F_1^{l_1}\cdots F_r^{l_r}}{F_1^{m_1}\cdots F_r^{m_r}}\right) &= \dfrac{f_1^{l_1}\cdots f_r^{l_r}}{f_1^{m_1}\cdots f_r^{m_r}},\quad \text{and}\\
  (\nu\circ\tilde{p})^*\left(\dfrac{F_1^{l_1}\cdots F_r^{l_r}}{F_1^{m_1}\cdots F_r^{m_r}}\right) &= \dfrac{\tilde{g}_1^{l_1}\cdots \tilde{g}_r^{l_r}}{\tilde{g}_1^{m_1}\cdots \tilde{g}_r^{m_r}}\\
   &= (\nu\circ p\circ q)^*\left(\dfrac{F_1^{l_1}\cdots F_r^{l_r}}{F_1^{m_1}\cdots F_r^{m_r}}\right),
\end{align*}
  and therefore $p\circ q=\tilde{p}$. Because $\tilde{p}$ factors through $q$, we deduce that $C_l$ acts on $e^{d_i\int\! f}=\tilde{g}_i/\left(f_i\circ p\circ q\right)$ by scalars for all $i=1,\ldots,r$. Thus, a power of $e^{\int\! f}$ lies in $K_L$. This implies that $f\in K_L$, since $\left(e^{m\int\! f}\right)'/e^{m\int\! f}=mf$ for every integer $m$. Hence, $y_i=e^{\int\! f}\cdot\left(y_{C,i}\circ p\right)$ for $i=1,\ldots,n$, and $L(y)=0$ is projectively equivalent to the pullback by $p$ of $L_C(y)=0$.

    Finally, assume that $H$ is a normal subgroup of $PG$. Let $L_{C,H}(y)=0$ be the linear ordinary differential equation obtained by applying Theorem \ref{theo0} to the action of $H$ on $C$ and a normalization $\mu: C_1 \rightarrow C/H$. By the second corollary of Theorem \ref{theo0}, $L_{C,H}(y)=0$ is projectively equivalent to the pullback of $L_C(y)=0$ by a rational map $p_H: C_1 \rightarrow C_0$. Furthermore, based on the second part of the proof established above, $L(y)=0$ is projectively equivalent to the pullback of $L_{C,H}(y)=0$ by a rational map $p_H': C_L \rightarrow C_1$. Consequently, $L(y)=0$ is projectively equivalent to the pullback of $L_C(y)=0$ by the composite rational map $p=p_H\circ p_H': C_L \rightarrow C_0$.
  \end{proof}

\section{Schwarz maps for modular curves}

  We now direct our attention to the applicability of the preceding results, in particular Theorem \ref{theo0}, to the case of modular curves.
  
  Let $\Gamma$ be a finite-index subgroup of $PSL_2(\mathbb{Z})$ and let $X_\Gamma=\mathbb{H}/\Gamma$ be the associated modular curve. The quotient map $\pi: X_\Gamma \rightarrow X(1)\simeq X_\Gamma/PSL_2(\mathbb{Z})$ is a branched cover, and we consider a normalization $\nu: \mathbb{P}^1(\mathbb{C}) \rightarrow X_\Gamma/PSL_2(\mathbb{Z})$. The group $PSL_2(\mathbb{Z})/\Gamma$ acts faithfully on $X_\Gamma$. Thus, to apply Theorem \ref{theo0}, we must find, on the one hand, a model $C_\Gamma$ of $X_\Gamma$ in a projective space $\mathbb{P}^{n-1}(\mathbb{C})$, together with a projective representation of $PSL_2(\mathbb{Z})/\Gamma$ in $PGL_n(\mathbb{C})$, making the induced map $X_\Gamma \rightarrow C_\Gamma\subseteq \mathbb{P}^{n-1}(\mathbb{C})$ equivariant; and, on the other hand, a lifting of this projective representation to a linear representation in $GL_n(\mathbb{C})$ of a central extension of $PSL_2(\mathbb{Z})/\Gamma$.
  
  Our discussion in Section \ref{thetasec} on theta functions with characteristics provides a systematic way to obtain the desired equivariant model for the case where $\Gamma=\tilde{\Gamma}(N)$ and $N\ge 3$ is odd, or where $\Gamma=\tilde{H}(N)$ and $N\ge 4$ is even. We now state and prove our main theorem.

  \begin{thm}\label{mainthm}
  For every odd integer $N\ge 3$, let $X(N)$ denote the modular curve associated with the principal congruence subgroup $\Gamma(N)$; and for every even integer $N\ge 4$, let $X_H(N)$ denote the modular curve associated with the subgroup $H(N)=\Gamma(N)\cap\Gamma_0^0(2N)$. There exists a canonical invariant projective model $C_N$ of $X(N)$ (respectively, $X_H(N)$), together with a linear ordinary differential equation with rational coefficients whose Schwarz map parametrizes $C_N$ and whose projective monodromy group is isomorphic to the finite quotient $PSL_2(\mathbb{Z})/\tilde{\Gamma}(N)$ (respectively, $PSL_2(\mathbb{Z})/\tilde{H}(N)$). The equation is unique up to projective equivalence, and its Picard-Vessiot extension is an Abelian extension of the function field of $X(N)$ (respectively, $X_H(N)$).
  \end{thm}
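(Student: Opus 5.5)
The plan is to apply Theorem \ref{theo0} to the equivariant model furnished by Theorem \ref{uniformthm}, and then read off each clause of the statement from the two corollaries and from the structure of $PG^{SL}$.

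First, set $C=C_N$, the image of $\Phi_N$. Let $G\subset GL_M(\mathbb{C})$ be the finite group generated by the matrices $S_*$ and $T_*$, each rescaled if necessary so that the generated group is finite. Its image $PG$ in $PSL_M(\mathbb{C})$ is isomorphic to $PSL_2(\mathbb{Z})/\tilde{\Gamma}(N)$ (respectively $PSL_2(\mathbb{Z})/\tilde H(N)$), because $\rho_N$ is faithful on this quotient. The hypotheses of Theorem \ref{theo0} must then be checked:
\begin{itemize}
\item $C_N$ is irreducible and algebraic, being the image of the compact connected curve $X(N)$ (respectively $X_H(N)$).
\item $C_N$ is $G$-invariant, by the equivariance in Theorem \ref{uniformthm}.
\item $PG$ acts faithfully on $C_N$, since $PSL_2(\mathbb{Z})/\Gamma$ acts faithfully on $X_\Gamma$ and the embedding is equivariant.
\item $C_N$ lies in no hyperplane. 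This amounts to the linear independence over $\mathbb{C}$ of the theta constants $\phi_l$. I would prove it from their $q$-expansions: the leading exponents $N(2l+1)^2/(8N^2)\cdot 2$, respectively $l^2/N$, are pairwise distinct for distinct $l$ in the given range. Equivalently, the $\phi_l$ are eigenvectors of $T_*$ with distinct eigenvalues, and none vanishes identically.
\end{itemize}

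Second, the quotient $C_N/G\cong X(1)$ has genus $0$, so the normalization is $\nu:\mathbb{P}^1(\mathbb{C})\to C_N/G$ and $K=\mathbb{C}(z)$; one may take $z=j$. Theorem \ref{theo0} then gives an order-$M$ equation with coefficients in $\mathbb{C}(z)$, hence rational coefficients. Its Schwarz map parametrizes $C_N$, and its differential Galois group is $PG^{SL}$. The projective monodromy is the image of $PG^{SL}$ in $PSL_M$, namely $PG\cong PSL_2(\mathbb{Z})/\tilde\Gamma(N)$: for a finite group, monodromy and Galois group coincide, and the projective image is unchanged under passing to the central extension. Uniqueness up to projective equivalence is exactly the first corollary of Theorem \ref{theo0}, since $C_N$ and $\nu$ are fixed. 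Canonicity of $C_N$ rests on its being cut out by theta constants with a fixed ordering, which is determined up to the projective representation.

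Third, for the abelian claim: the Picard--Vessiot extension is $\mathbb{C}(\tilde C)=\mathbb{C}(C_N)(f)$ with $f^m=f_0$. Here $\mathbb{C}(C_N)$ is the function field of $X(N)$ (respectively $X_H(N)$), and the extension is a Kummer extension of degree $m$ with cyclic Galois group $\ker(PG^{SL}\to PG)$, hence abelian. This follows from the construction preceding Theorem \ref{theo0}.

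The main obstacle is the first step, specifically the non-degeneracy and the faithfulness/birationality inputs. I will take Theorem \ref{uniformthm} as given for the embedding. The linear independence of the $\phi_l$ is the one point needing a real argument, and I would settle it by the distinct-$T_*$-eigenvalue argument above, noting that $\phi_l\not\equiv 0$ because its leading $q$-coefficient is $1$, respectively $2$ when the terms $n$ and $-n-1$ pair up. A secondary subtlety is that $\rho_N$ is only projective. Theorem \ref{theo0} needs $G$ to be a finite linear group, so I would take $G$ to be the preimage of $PG$ in $SL_M(\mathbb{C})$, which is finite because $PG$ is finite and the kernel of $SL_M(\mathbb{C})\to PSL_M(\mathbb{C})$ is finite.
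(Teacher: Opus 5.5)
Your proposal follows the paper's proof. The model comes from Theorem \ref{uniformthm}, the equation from Theorem \ref{theo0} applied with the rational normalization of $C_N/G$, uniqueness from the first corollary, and the abelian property from $\mathbb{C}(C_N)$ being the fixed field of the cyclic central kernel of $PG^{SL}\to PG$. Your explicit checks of the hypotheses of Theorem \ref{theo0} (finiteness of the lifted group, faithfulness, non-degeneracy) are points the paper leaves implicit.

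One correction to those checks. The leading-exponent argument and the $T_*$-eigenvalue argument for linear independence of the $\phi_l$ are not equivalent, and the eigenvalue version, which you say you would use, fails in general. The reason is that $\phi_l(\tau+1)=e^{2\pi i\lambda_l}\phi_l(\tau)$, where $\lambda_l$ is the leading exponent of $\phi_l$ in $q=e^{2\pi i\tau}$. So $T_*$ only sees $\lambda_l$ modulo $1$.
\begin{itemize}
\item For $N=15$, the exponents $(2l+1)^2/(8N)$ at $l=0$ and $l=5$ are $1/120$ and $121/120$.
\item For $N=8$, the exponents $l^2/(2N)$ at $l=0$ and $l=4$ are $0$ and $1$.
\end{itemize}
In both cases $T_*$ has a repeated eigenvalue. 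Settle non-degeneracy with the $q$-expansion argument instead: the leading exponents are distinct rational numbers and their coefficients are nonzero. Note also that in the odd case the leading coefficient is $e^{\pi i(2l+1)/(2N)}$ rather than $1$; this is harmless, since only non-vanishing matters.
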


  \begin{proof}
    The existence of the canonical invariant projective model $C_N$ of $X(N)$ (respectively, $X_H(N)$) is a consequence of Theorem \ref{uniformthm}. The existence of the linear ordinary differential equation with rational coefficients whose Schwarz map parametrizes $C_N$ and whose projective monodromy group is isomorphic to the finite quotient $PSL_2(\mathbb{Z})/\tilde{\Gamma}(N)$ (respectively, $PSL_2(\mathbb{Z})/\tilde{H}(N)$) is a consequence of Theorem \ref{theo0} once we take a normalization of the quotient of $C_N$, which is a rational curve. The uniqueness up to projective equivalence follows from the first corollary of Theorem \ref{theo0}. Finally, the differential Galois group of the equation from Theorem \ref{theo0} is a central extension of the finite quotient $PSL_2(\mathbb{Z})/\tilde{\Gamma}(N)$ (respectively, $PSL_2(\mathbb{Z})/\tilde{H}(N)$), and therefore its Picard--Vessiot extension is an abelian extension of the function field of $X(N)$ (respectively, $X_H(N)$).
  \end{proof}

  \subsection*{An equation for $X(N)$, for even $N\ge 4$}

  We now turn our attention to the specific case of the modular curve $X(N)$, for even $N\ge 4$.
  
  The subgroup $H(N)$ is contained in the principal congruence subgroup $\Gamma(N)$. Both are normal subgroups of $SL_2(\mathbb{Z})$. Therefore, the quotient $\tilde{\Gamma}(N)/\tilde{H}(N)$ is a normal subgroup of $PSL_2(\mathbb{Z})/H(N)$. Hence, the quotient $X_H(N)=\mathbb{H}/H(N)$ is a Galois branched cover of $X(N)=\mathbb{H}/\Gamma(N)$ with Galois group isomorphic to the finite quotient $\tilde{\Gamma}(N)/\tilde{H}(N)$. This finite quotient is isomorphic to the Klein four-group $V_4\simeq\mathbb{Z}/2\mathbb{Z}\times\mathbb{Z}/2\mathbb{Z}$, and is generated by the classes of the matrices $\begin{pmatrix}1 & N\\0 &1\end{pmatrix}$ and $\begin{pmatrix}1 & 0\\N & 1\end{pmatrix}$ in $SL_2(\mathbb{Z})$.
  
  Moreover, as $\tilde{\Gamma}(N)/\tilde{H}(N)$ is a normal subgroup of $PSL_2(\mathbb{Z})/\tilde{H}(N)$, in this construction $X(N)$ is a normal covering of $X(1)$ with Galois group isomorphic to the finite quotient $PSL_2(\mathbb{Z})/\tilde{\Gamma}(N)$. Therefore, a model for $X(N)$ can be obtained from the model $C_N$ of $X_H(N)$ by taking the quotient by the action of $V_4$. This model then sits inside the quotient space $\mathbb{P}^{M-1}(\mathbb{C})/V_4$, for $M=N/2+1$, which consequently inherits an action of the quotient $PSL_2(\mathbb{Z})/\tilde{\Gamma}(N)$. Finally, by applying Theorem \ref{theo0}, we obtain a linear ordinary differential equation with rational coefficients whose Schwarz map parametrizes the model $C_N/V_4$ of $X(N)$ and whose projective monodromy group is isomorphic to the finite quotient $PSL_2(\mathbb{Z})/\tilde{\Gamma}(N)$.

\subsection{The equation for $X(9)$}

  We conclude by illustrating our main theorem, Theorem \ref{mainthm}, with the concrete case of the modular curve $X(9)$. In this case, we have $M=4$, and therefore the canonical invariant projective model $C_9$ of $X(9)$ is a curve in $\mathbb{P}^3(\mathbb{C})$. The group $PSL_2(\mathbb{Z}/9\mathbb{Z})$ has order $216$, and its minimal central extension $G=PSL_2(\mathbb{Z}/9\mathbb{Z})^{SL}$ has order $648$. The matrices generating this central extension $G$ can be obtained by normalizing the matrices $S_*$ and $T_*$ introduced in Section \ref{thetasec}. 
  
  In this case, the model $C_9$ is isomorphic to $X(9)$, and the equations defining $C_9$ in $\mathbb{P}^3(\mathbb{C})$ are given by the vanishing of the following homogeneous polynomials:
  \begin{align*}
    E_1 &= \zeta^5X_2X_3^2 - \zeta X_0 X_2^2 + X_0^2 X_3, \\
    E_2 &= \zeta^5X_0^2X_2 - \zeta^3X_1^3 - \zeta X_2^2X_3 + X_0X_3^2 + X_1^3,
  \end{align*}
  where $\zeta$ is a primitive $18$-th root of unity. These two equations can be derived from the residue theorem, as discussed in Subsection \ref{modelsec}. The Molien series for the action of $G$ on $\mathbb{C}[X_0,X_1,X_2,X_3]$ is represented by the rational function
  \[
  M_G(t)=\dfrac{1+2t^{12}+2t^{18}+2t^{24}+t^{36}}{(1-t^4)(1-t^6)(1-t^{12})(1-t^{18})}.
  \]
  Therefore, according to the Hironaka decomposition, the algebra of invariants $\mathbb{C}[X_0,X_1,X_2,X_3]^G$ is a finite module over the ring generated by four algebraically independent homogeneous polynomials $F_4$, $F_6$, $F_{12}$, and $F_{18}$ of degrees $4$, $6$, $12$, and $18$, respectively. As a module, it is generated by $1$, two homogeneous polynomials $S_{12}$ and $T_{12}$ of degree $12$, two homogeneous polynomials $S_{18}$ and $T_{18}$ of degree $18$, two homogeneous polynomials $S_{24}$ and $T_{24}$ of degree $24$, and one homogeneous polynomial $S_{36}$ of degree $36$. These generating invariants, together with their syzygies, can be explicitly obtained using a Computer Algebra System.
  
  Using \textsc{Singular}, we obtain:
  \begin{align*}
    F_{4} &= X_0^3X_1+(1-\zeta^3)X_1X_2^3+(\zeta^5-3)X_0X_1X_2X_3+X_1X_3^3,\\
    F_{6} &= 3X_1^6+(5\zeta^4-5\zeta)X_0^4X_2^2+(2\zeta^4)X_0X_2^5+(2-2\zeta^3)X_0^5X_3\\
     & \quad -20\zeta^3X_0^2X_2^3X_3+20\zeta^2X_0^3X_2X_3^2+(5\zeta^2-5\zeta^5)X_2^4X_3^2\\
     & \qquad +(20\zeta^4-20\zeta)X_0X_2^2X_3^3+(5-5\zeta^3)X_0^2X_3^4+2\zeta^2X_2X_3^5.
  \end{align*}
  The remaining invariants are lengthy and not uniquely determined by a few coefficients; therefore, we omit their explicit expressions here. They can, however, be retrieved from the corresponding author's webpage.
  
  Using Gröbner bases, the ideal defining the quotient curve $C_9/G$ is obtained by computing the elimination ideal of $\langle E_1,E_2\rangle$ intersected with the ring of invariants $\mathbb{C}[X_0,X_1,X_2,X_3]^G$: 
 \begin{align*}
 \langle E_1,E_2\rangle\cap\mathbb{C}[X_0,X_1,X_2,X_3]^G &  = \langle F_6, 319S_{12} - 124F_{12}, 528821S_{18} - F_{18},\\
  & \quad -7337S_{24} + 12710T_{24}, 205T_{12} - 46F_{12},\\
  & \quad 528821T_{18} - F_{18}, 4F_{4}^3 - F_{12},\\
  & \quad 15376F_{12}^2 - 101761S_{24}, -124F_{12}S_{36} + 319S_{24}^2,\\
  & \quad 124F_{12}S_{24} - 319S_{36}\rangle.
 \end{align*}
  From this, we compute the rational functions $f_i$, $s_i$, and $t_i$ in the function field of $X(9)/PSL_2(\mathbb{Z}/9\mathbb{Z})$ that are required to apply the algorithm from Section \ref{schwarzinvsec} in order to obtain the linear ordinary differential equation with rational coefficients whose Schwarz map parametrizes $C_9$ and whose projective monodromy group is isomorphic to $PSL_2(\mathbb{Z}/9\mathbb{Z})$. We set:
\begin{align*}
  & f_{4}(z)=z, \quad f_{6}(z)=0, \quad f_{12}(z)=4z^3, \quad f_{18}(z)=12691704\sqrt{3}z^5,\\
  & s_{12}(z)=\dfrac{496}{319}z^3, \quad s_{18}(z)=24\sqrt{3}z^5, \quad s_{24}(z)=\dfrac{246016}{101761}z^6, \quad s_{36}(z)=\dfrac{122023936}{32461759}z^9,\\
  & t_{12}(z)=\dfrac{184}{205}z^3, \quad t_{18}(z)=24\sqrt{3}z^5, \quad t_{24}(z)=\dfrac{91264}{65395}z^6.
\end{align*}
  The resulting equation is of order $4$ and has coefficients in $\mathbb{C}(z)$, where $z$ is a \emph{Hauptmodul} for $X(9)/PSL_2(\mathbb{Z}/9\mathbb{Z})$ and ramifies at $z=0,1,\infty$. The explicit form of this equation is
\begin{align*}
  0 & = y^{(4)} + \dfrac{7z - 3}{z(z - 1)}y^{(3)} + \dfrac{2213z^2 - 1895z + 162}{216z^2(z - 1)^2}y^{(2)} + \dfrac{26291z - 7425}{11664z^2(z - 1)^2}y^{(1)} \\
   & \qquad - \dfrac{253}{559872z^2(z - 1)^2}y,
\end{align*}
  where the \emph{Hauptmodul} was chosen such that the exponents of the equation at $z=0$ are $0, 1/2, 1, 3/2$ and at $z=1$ are $0, 1/3, 2/3, 1$. In particular, at $z=\infty$ we have $-1/36, 3/36, 11/36, 23/36$. By construction, the projective monodromy group of this equation is isomorphic to $PSL_2(\mathbb{Z}/9\mathbb{Z})$.

\bibliographystyle{plain}
\bibliography{DGT}

\end{document}